\documentclass[a4paper,10pt]{siamltex} 
\usepackage{amsmath,amssymb,amsfonts,graphicx,epsfig,color,url}
\usepackage{latexsym,mathrsfs,cite} 
\newtheorem{Remark}[theorem]{Remark}



\setlength{\parskip}{1mm}


\definecolor{colorJRblue}{rgb}{0.,0.,1.}
\definecolor{colorJRred}{rgb}{1.,0.,0.}
\definecolor{colorESpurple}{rgb}{1.,0.,1.}


\def\C{\mathbb{C}}
\def\R{\mathbb{R}}
\def\E{\mathbb{E}}
\def\N{\mathbb{N}}
\def\L{\mathcal{L}}
\def\X{\mathcal{X}}
\def\V{\mathcal{V}}
\def\calT{\mathcal{T}}

\def\Lspace{L}
\def\Hspace{H}
\def\BC{\mathrm{BC}}
\def\BUC{\mathrm{BUC}}

\def\B{\hat B}
\def\spec{\mathrm{spec}}
\def\per{\mathrm{per}}
\def\rme{\mathrm{e}}
\def\rmi{\mathrm{i}}

\def\rmd{\mathrm{d}}

\def\bu{\overline{u}}
\def\bw{\overline{w}}
\def\bv{\overline{v}}
\def\v{\mathbf{u}}


\def\typeout#1{\message{^^J}\message{#1}\message{^^J}}
\newif\ifSRCOK \SRCOKtrue
\newcount\PAGETOP
\newcount\LASTLINE
\global\PAGETOP=1
\global\LASTLINE=-1
\def\EJECT{\SRC\eject}
\def\WinEdt#1{\typeout{:#1}}
\gdef\MainFile{\jobname.tex}
\gdef\CurrentInput{\MainFile}
\newcount\INPSP
\global\INPSP=0
\def\SRC{\ifSRCOK%
  \ifnum\inputlineno>\LASTLINE%
    \ifnum\LASTLINE<0%
      \global\PAGETOP=\inputlineno%
    \fi%
    \global\LASTLINE=\inputlineno%
    \ifnum\INPSP=0%
      \ifnum\inputlineno>\PAGETOP%
        
      \fi%
    \else%
      
    \fi%
  \fi%
\fi}
\def\PUSH#1{%
\SRC%
\ifnum\INPSP=0 \global\let\INPSTACKA=\CurrentInput \else%
\ifnum\INPSP=1 \global\let\INPSTACKB=\CurrentInput \else%
\ifnum\INPSP=2 \global\let\INPSTACKC=\CurrentInput \else%
\ifnum\INPSP=3 \global\let\INPSTACKD=\CurrentInput \else%
\ifnum\INPSP=4 \global\let\INPSTACKE=\CurrentInput \else%
\ifnum\INPSP=5 \global\let\INPSTACKF=\CurrentInput \else%
               \global\let\INPSTACKX=\CurrentInput \fi\fi\fi\fi\fi\fi%
\gdef\CurrentInput{#1}%
\WinEdt{<+ \CurrentInput}%
\global\LASTLINE=0%
\ifSRCOK\fi%
\global\advance\INPSP by 1}
\def\POP{%
\ifnum\INPSP>0 \global\advance\INPSP by -1  \fi%
\ifnum\INPSP=0 \global\let\CurrentInput=\INPSTACKA \else%
\ifnum\INPSP=1 \global\let\CurrentInput=\INPSTACKB \else%
\ifnum\INPSP=2 \global\let\CurrentInput=\INPSTACKC \else%
\ifnum\INPSP=3 \global\let\CurrentInput=\INPSTACKD \else%
\ifnum\INPSP=4 \global\let\CurrentInput=\INPSTACKE \else%
\ifnum\INPSP=5 \global\let\CurrentInput=\INPSTACKF \else%
               \global\let\CurrentInput=\INPSTACKX \fi\fi\fi\fi\fi\fi%
\WinEdt{<-}%
\global\LASTLINE=\inputlineno%
\global\advance\LASTLINE by -1%
\SRC}
\def\INPUT#1{\relax}
\def
\PUSH{#}%
\input #%
\POP1{
\SRC%
\PUSH{#1}%
\input #1%
\POP%
\SRC}
\let\originalxxxeverypar\everypar
\newtoks\everypar
\originalxxxeverypar{\the\everypar\expandafter\SRC}
\everymath\expandafter{\the\everymath\expandafter\SRC}
\output\expandafter{\expandafter\SRCOKfalse\the\output}


\newif\ifSRCOK \SRCOKtrue
\DeclareOption{active}{\SRCOKtrue}
\DeclareOption{inactive}{\SRCOKfalse}
\ExecuteOptions{active}
\ProcessOptions
\newcount\PAGETOP
\newcount\LASTLINE
\global\PAGETOP=1
\global\LASTLINE=-1
\gdef\MainFile{\jobname.tex}
\gdef\CurrentInput{\MainFile}
\newcount\INPSP
\global\INPSP=0
\def\EJECT{\SRC\eject}
\def\WinEdt#1{\typeout{:#1}}
\def\SRC{\ifSRCOK%
  \ifnum\inputlineno>\LASTLINE%
    \ifnum\LASTLINE<0%
      \global\PAGETOP=\inputlineno%
    \fi%
    \global\LASTLINE=\inputlineno%
    \ifnum\INPSP=0%
      \ifnum\inputlineno>\PAGETOP%
      \fi%
    \else%
    \fi%
  \fi%
\fi}
\def\PUSH#1{%
\SRC%
\ifnum\INPSP=0 \global\let\INPSTACKA=\CurrentInput \else%
\ifnum\INPSP=1 \global\let\INPSTACKB=\CurrentInput \else%
\ifnum\INPSP=2 \global\let\INPSTACKC=\CurrentInput \else%
\ifnum\INPSP=3 \global\let\INPSTACKD=\CurrentInput \else%
\ifnum\INPSP=4 \global\let\INPSTACKE=\CurrentInput \else%
\ifnum\INPSP=5 \global\let\INPSTACKF=\CurrentInput \else%
               \global\let\INPSTACKX=\CurrentInput \fi\fi\fi\fi\fi\fi%
\gdef\CurrentInput{#1}%
\WinEdt{<+ \CurrentInput}%
\global\LASTLINE=0%
\ifSRCOK\fi%
\global\advance\INPSP by 1}
\def\POP{%
\ifnum\INPSP>0 \global\advance\INPSP by -1  \fi%
\ifnum\INPSP=0 \global\let\CurrentInput=\INPSTACKA \else%
\ifnum\INPSP=1 \global\let\CurrentInput=\INPSTACKB \else%
\ifnum\INPSP=2 \global\let\CurrentInput=\INPSTACKC \else%
\ifnum\INPSP=3 \global\let\CurrentInput=\INPSTACKD \else%
\ifnum\INPSP=4 \global\let\CurrentInput=\INPSTACKE \else%
\ifnum\INPSP=5 \global\let\CurrentInput=\INPSTACKF \else%
               \global\let\CurrentInput=\INPSTACKX \fi\fi\fi\fi\fi\fi%
\WinEdt{<-}%
\global\LASTLINE=\inputlineno%
\global\advance\LASTLINE by -1%
\SRC}
\def\INPUT#1{\relax}
\let\OldINCLUDE=\include
\def\include#1{
\EJECT%
\PUSH{#1.tex}%
\OldINCLUDE{#1}%
\POP}
\def
\PUSH{#}%
\input #%
\POP1{
\PUSH{#1}%
\input #1%
\POP}
\let\originalxxxeverypar\everypar
\newtoks\everypar
\originalxxxeverypar{\the\everypar\expandafter\SRC}
\everymath\expandafter{\the\everymath\expandafter\SRC}
\let\zzzxxxbibliography=\bibliography
\def\bibliography#1{\PUSH{\jobname.bbl}\zzzxxxbibliography{#1}\POP}
\output\expandafter{\expandafter\SRCOKfalse\the\output}


\begin{document}

\title{Quasilinear parabolic reaction-diffusion systems: user's guide to well-posedness, spectra and stability of  travelling waves}

\author{ M.\ Meyries\thanks{Martin-Luther-Universit\"at Halle-Wittenberg, Institut f\"ur Mathematik, 06099 Halle, Germany; martin.meyries@mathematik.uni-halle.de} \and J.D.M. Rademacher\thanks{Universit\"at Bremen, Fachbereich Mathematik, Postfach 33 04 40, 28359 Bremen, Germany; rademach@math.uni-bremen.de} \and E.\ Siero\thanks{Mathematisch Instituut, Universiteit Leiden, P.O. Box 9512, 2300 RA Leiden, the Netherlands; esiero@math.leidenuniv.nl} }

\maketitle

\begin{abstract}
This paper is concerned with quasilinear parabolic reaction-diffusion-advection systems on extended domains.  Frameworks for well-posedness  in Hilbert spaces and spaces of continuous functions are presented, based on known results using maximal regularity. It is shown that spectra of travelling waves on the line are meaningfully given by the familiar tools for semilinear equations, such as dispersion relations, and basic connections of spectra to stability and instability are considered. In particular, a principle of linearized orbital instability for manifolds of equilibria is proven. Our goal is to provide easy access for applicants to these rigorous aspects.  As a guiding example the Gray-Scott-Klausmeier model for vegetation-water interaction is considered in detail. 
\end{abstract}



\section{Introduction}\label{s:intro}~
In this paper we present rigorous frameworks for well-posedness, spectra and nonlinear stability of travelling wave solutions (pulses, fronts and wavetrains) of quasilinear parabolic reaction-diffusion systems of the form
\begin{equation} \label{e:quasi}
u_t = (a(u)u_x)_x  + f(u,u_x), \qquad t>0, \qquad x\in \R,
\end{equation}
with unknown $u(t,x)\in \R^N$. The nonlinearities $a,f$ are smooth and $a(u)\in \R^{N\times N}$ is strongly elliptic in the domain of interest, but does not have to be symmetric. We further consider a variant of \eqref{e:quasi} in higher space dimensions $x\in \R^n$ up to $n =3$.  The nonlinearities may also depend explicitly on $x$ in an appropriate way. 

Quasilinear reaction-diffusion systems arise as models in various contexts due to nonlinear fluxes, density dependent diffusion, self or cross diffusion, see e.g. \cite{Amann}. For pattern formation problems it is natural to consider an extended domain and to neglect the influence of boundary conditions. Travelling waves, i.e., solutions of \eqref{e:quasi} constant in a co-moving frame $\xi = x-ct$ with speed $c\in \R$ having constant or periodic asymptotic states, are among the simplest interesting reaction-diffusion patterns and are observed for different types of quasilinear systems, see, e.g., \cite{Maini, Mey, NagaiIkeda, WuZhao, Holzer, KumarHorsthemke, Meron}. 

For semilinear parabolic problems on the line it is well-known that e.g. $H^1$ or $\text{BUC}^1$ are suitable phase spaces for well-posedness in a perturbative setting \cite{CD, Henry}. The corresponding spectrum of the linearization is characterized in terms of the dispersion relation and the Evans function \cite{Evans,BjornSurvey}. In some situations, in particular when the essential spectrum does not touch the imaginary axis, nonlinear (orbital) stability of a wave can directly be deduced by a principle of linearized stability \cite{Henry,Schneider}. 

For quasilinear models an analogous unified framework for well-\-posed\-ness, spectra and stability of waves seems less known. 
It seems that  the majority of concrete well-posedness results in the literature concerns bounded domains. Moreover, when the general results are formulated abstractly or under abstract conditions, an applicant needs to search for suitable function spaces and verify hypotheses that lead astray (even though some examples provide guidelines).

However, the spectrum of the linearization in a travelling wave can only be meaningfully determined based on a well-posedness setting. For instance, a Turing-instability determined via the usual dispersion relation lacks a basis without a consistent phase space. Conveniently, the pattern forming nature of a Turing-instability can be identified ad hoc since the existence of travelling wave patterns is an ODE problem. Well-posedness is, however, required to prove that a spectrally unstable solution indeed is unstable under the nonlinear evolution. Such a result then justifies the computation of stability boundaries by the spectrum as in \cite{RSS,Sjors} (see also \S\ref{s:gkgs}). 

The purpose of this paper is to present rigorous settings for quasilinear parabolic problems in the travelling wave context as described above.  We aim for a presentation accessible  to applicants, in the spirit of \cite{BjornSurvey, CD, Henry} for semilinear problems. To this end we bring together and apply to \eqref{e:quasi} mostly abstract results from the different fields involved in well-posedness, spectra and stability.

There are several abstract settings for well-posedness of general quasilinear para\-bolic problems avaliable in the literature (see \cite{Amann, Amann3, CS, GR, Kato,  KPW10, Pruess, Lunardi1, Yagi}, and \cite{Amann2} as well as \S\ref{s:outlook} for a selective overview). These have advantages and disadvantages depending on the present context, and the geometric (qualitative) theory is more or less developed in each case.  On the other hand,  solutions may be constructed by fixed point arguments taylor-made for the issues under investigation (e.g. \cite{Zumbrun}). The (real) viscous conservation laws are an important and well studied class of quasilinear problems, where well-posedness results exploit the additional structure \cite{Kaw}. We refer to the survey \cite{ZumSurv} and the references therein. 

Our focus lies on the approach of \cite{CL, KPW10, Pruess} based on maximal $L^p$-regularity, but we also highlight the approach of \cite{Lunardi1} based on maximal H\"older regularity. Besides reaction-diffusion problems, the approach of \cite{CL, KPW10, Pruess} and its extensions apply successfully to the local theory of free boundary problems and to general parabolic problems with nonlinear boundary conditions. Here the geometric theory is well-developed and still advances, especially for the needs in the context of free boundary problems. The approach of \cite{Lunardi1} also applies to fully nonlinear problems.  

Recently, in \cite{PSZ09, PSZ09-2} the principle of linearized orbital stability with asymptotic phase for manifolds of equilibria has been established in the quasilinear case, for any sufficiently strong well-posedness setting (see e.g. \cite[Section 5.1]{Henry} for the semilinear case). It in particular applies to the orbital stability of pulses and fronts for \eqref{e:quasi} in both approaches mentioned before.  The conclusion from arbitrary unstable spectrum to nonlinear orbital instability of a manifold of equilibria does not seem to exist in the literature. Refining arguments from \cite[Theorem 5.1.5]{Henry} for single equilibria, we close this gap in the present paper. This might be of interest also in other contexts, where families of equilibria occur.

In more detail, our considerations may be summarized as follows. 
\begin{itemize}
\item In one space dimension, $x\in \R$, a possible phase space for the evolution under \eqref{e:quasi} of localized perturbations from travelling wave and other pattern type solutions  is the Sobolev space $H^2$ (Theorem \ref{t:quasiRDSpert}).  For non-localized perturbations $\BUC^2$ is a possible phase space (Theorem \ref{t:lun-1D}).
\item For space dimensions $x\in \R^n$ with $n\leq 3$ other possible phase spaces are certain Besov spaces, (real) interpolating between $L^2$ and the Sobolev space $H^2$ (Theorem \ref{t:quasiRDS-Rn}). Here the linearization can directly be considered on $L^2$.
\item The `spatial dynamics' spectral theory developed for semilinear parabolic systems on the line applies also in the quasilinear case, which allows to compute the spectrum of travelling waves in a familiar way (see \S\ref{s:specwtSob}). In particular, the spectrum is independent of the chosen setting (Proposition \ref{prop:spex-indie}).
\item The well-known nonlinear stability result with asymptotic phase for travelling waves with simple zero eigenvalue applies in  these settings (Proposition \ref{prop:stability}, as a direct consequence of \cite{PSZ09, PSZ09-2}). 
\item Without assuming a spectral gap or an unstable eigenvalue, it is shown that an unstable spectrum implies orbital instability of pulses and fronts (Theorem \ref{t:orb-instab}) and instability of wavetrains (Proposition \ref{prop:instab-wavetrain}). Here we rely on a general result on orbital instability of manifolds of equilibria (Lemma \ref{l:Henry-lem}).
\end{itemize}

We emphasize that  the divergence form \eqref{e:quasi} is only assumed in view of applications. In a smooth setting, the equation $u_t = a(u)u_{xx} + f(u,u_x)$ can be cast into divergence form by a suitable redefinition of $a$ and $f$.

We believe that also the more general results in \cite{SanSchModSpec} on spectra of modulated travelling waves carry over to the quasilinear case, but  we do not enter into details here. Also the nonlinear stability of wavetrains is not considered.  This is a delicate issue since zero always lies in the essential spectrum. Hence, the best one can hope for is heat-equation-like decay. Under certain assumptions this has been established for the semilinear reaction-diffusion case in \cite{Schneider,DSSS}. A special quasilinear case, more precisely the quasilinear IBL model, is considered in \cite{HUS12}. Also for viscous shocks the spectrum touches the origin and stability in weighted spaces can be established. We refer to \cite{ZuHo98}, the survey \cite{ZumSurv} and the references therein, as well as to \cite{BSZ10} for more recent results.

In \S\ref{s:gkgs} we illustrate our general considerations by means of the Gray-Scott-\-Klaus\-meier ve\-ge\-ta\-tion-water interaction model \cite{Klausmeier}, for $x\in \R$ given  by
\begin{equation} \label{e:gkgs}
\begin{aligned}
 w_t=&(w^2)_{xx}+Cw_x+A(1-w)-wv^2,\\
 v_t=&Dv_{xx}-Bv+wv^2,
\end{aligned}
\end{equation}  
with constants $A,B\geq 0$, $C\in \R$ and $D >0$. This system is the original motivation for the present study. It is quasilinear due to the porous medium term $(w^2)_{xx} = 2(ww_{xx} + (w_x)^2)$ and is therefore parabolic only in the regime $w> 0$, in which \eqref{e:gkgs}  supports a large family of travelling waves (see \cite{Sjors} and \S\ref{s:gkgs}). 

This paper is organized as follows. In \S\ref{s:gkgs} we expand the discussion of \eqref{e:gkgs} and illustrate the applicability of the subsequent general results. \S\ref{s:wellHil} is devoted to different well-posedness setting results for \eqref{e:quasi}, and in \S\ref{s:spec} the spectrum of the linearization in travelling waves is treated. The connection to nonlinear stability and instability is considered in \S\ref{s:nl-stab}.  For the sake of self-containedness we prove some technical results in the appendix.

\textbf{Notation.} All Banach spaces are real, and we consider complexifications if necessary. We write $\mathscr L(X_1,X_0)$ for the bounded linear operators between Banach spaces $X_0, X_1$, and $\mathscr L(X_0) = \mathscr L(X_0,X_0)$.

\textbf{Acknowledgement.} J.R. and E.S. acknowledge support by the Complexity program of the Dutch research fund (NWO). J.R. is grateful for the support of the NWO cluster NDNS+ and his previous employer, Centrum Wiskunde \& Informatica (CWI), Amsterdam. M.M.\ and E.S.\ thank the CWI for its kind hospitality. The authors thank Johannes H\"owing for his comments.


\section{A generalized Gray-Scott-Klausmeier model}\label{s:gkgs}

For illustration of the subsequent considerations, let us consider the model \eqref{e:gkgs} for water-vegetation interaction in semi-arid landscapes. Here $A$ is roughly a measure of the rainfall. On the one hand, \eqref{e:gkgs} is (a rescaling of) the Klausmeier model for banded vegetation patterns on a sloped terrain from \cite{Klausmeier}, when removing the porous medium term $(w^2)_{xx}$. On the other hand, upon replacing $(w^2)_{xx}$ by $w_{xx}$ and setting $C=0$, \eqref{e:gkgs} is precisely the semilinear Gray-Scott model, which has been extensively studied in the past decades, see, e.g., \cite{CW09,DKZ97,MDK00} and the references therein. The relations between these different models in terms of periodic patterns have been studied in \cite{Sjors}. From an application point of view it is important to know in which patterned state these model systems may reside, and thus to establish well-posedness as well as existence, stability and instability  of patterns. 

In order to illustrate the straightforward applicability of the frameworks of  the following sections, we show well-posedness around travelling waves with first component bounded away from zero. We then consider homogeneous steady states and wavetrains, and derive the dispersion relations. These are illustrated by numerical computations of spectra when passing a Turing-Hopf bifurcation and a sideband instability.

\subsection{Well-posedness for perturbations of travelling waves}\label{s:gkgswell} To cast \eqref{e:gkgs} into the form \eqref{e:quasi} we set $\v = (w,v)$ and define the smooth nonlinearities $a:\R^2\to \R^{2\times 2}$ and $f:\R^2\to \R^{2}$ by
$$a(\v) = \left (\begin{array}{cc} 2w & 0 \\ 0 & D \end{array}\right),\qquad f(\v,\v_x) = \left (\begin{array}{c} C w_x + A(1-w) -wv^2 \\ -Bv +wv^2\end{array}\right).$$
Then \eqref{e:gkgs} is equivalent to
$$\v_t = (a(\v)\v_x)_x + f(\v,\v_x).$$
We see that $a(\v)$ is positive definite only for $w> 0$, and thus \eqref{e:gkgs} fails to be parabolic for $w\leq 0$. We therefore restrict to $w > 0$. From the quasi-positive structure of $f$ for $A>0$ and the smoothness of solutions given by the well-posedness, it readily follows that \eqref{e:gkgs} preserves $w>0$ on the maximal existence interval.

Assume that $\v_*(t,x) = \overline{\v}(x - ct)$ is a travelling wave solution of \eqref{e:gkgs} with profile 
$$\overline{\v} = (\bw,\bv) \in \BC^\infty(\R, \R^2)$$
satisfying $\bw\geq \delta > 0$, and speed $c\in \R$. Note that this includes homogeneous steady states. Denote the co-moving frame $x-ct$ again by $x$. As for \eqref{e:1234}, the evolution of perturbations $\v$ of $\overline{\v}$ under \eqref{e:gkgs} is governed by
\begin{equation}\label{e:1234-2}
\v_t = (a(\overline{\v}+\v) \v_x)_x  + ( a(\overline{\v}+\v) \overline{\v}_x)_x + c(\overline{\v}_x + \v_x) + f(\overline{\v}+\v, \overline{\v}_x + \v_x).
\end{equation}
Choose $\V$ as any open subset of $\X = H^2$, $\X = B_{2,p}^{2-2/p}$ with $p > 2$ sufficiently large or $\X = \BUC$ such that $\bw+w$ is positive and bounded away from zero for all $\overline{\v} = (\bw,\bv) \in \V$. This is possible in view of the Sobolev embeddings $H^2\subset \BUC$ and \eqref{e:Sob-Besov}. The Theorems \ref{t:quasiRDSpert}, \ref{t:quasiRDS-Rn} or \ref{t:lun-1D} apply and yield local well-posedness of \eqref{e:1234-2} in $\V$, respectively, in a sense as for the Theorems \ref{t:KPW-abstract} and \ref{t:lun-1D}. Solutions are in fact smooth in space and time (see Remark \ref{r:smooth}).  

The eigenvalue problem for the linearization of the right-hand side of \eqref{e:1234-2} in $\v = 0$ is for $\lambda\in \C$ given by 
\begin{equation}\label{e:ev-lin}
\begin{aligned}
\lambda w\,& =2\bw w_{xx}  + 4\bw_x w_x + 2\bw_{xx} w + (C+c)w_x-Aw-\bv^2w-2\bw\, \bv v, \\
\lambda v\, &=Dv_{xx} + c v_x -Bv+\bv ^2w+2\bw \,\bv v.
\end{aligned}
\end{equation}
By Proposition \ref{prop:spex-indie}, the spectrum of the linearization is independent of the above functional analytical frameworks. A brief account for the computation of the spectrum is given in $\S$\ref{s:specwtSob}, and we refer to \cite{BjornSurvey} for a survey. Nonlinear stability or instability of $\v_*$ can be deduced from the results in $\S$\ref{s:nl-stab} in some situations, as pointed out below.

\subsection{Homogeneous steady states}

\begin{figure}
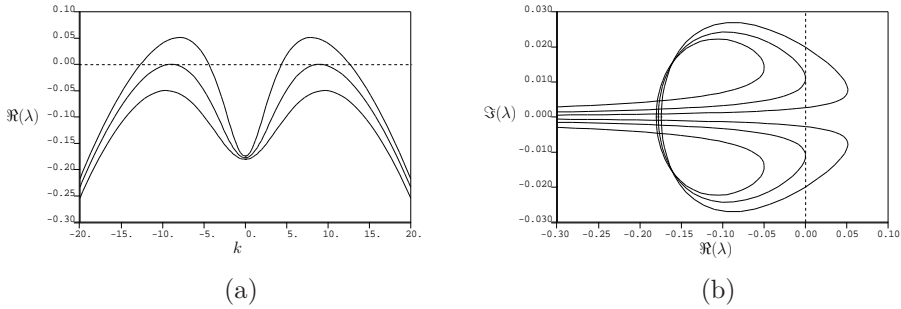

\begin{center}
\begin{tabular}{cc}
\scalebox{0.3}{\input{TuringHopf.tex}} & \scalebox{0.3}{\input{TuringHopf2.tex}} \\
(a) & (b)
\end{tabular}
\caption{Spectra of the homogeneous steady state $(w^+,v^+)$ of \eqref{e:gkgs} for $B=C=0.2$, $D=0.001$ before the Turing-Hopf instability, $A=0.63$ (stable), near to it, $A=0.53$, and after it, $A=0.43$ (unstable). (a) Real part of spectrum vs.\ linear wavenumber, (b)  Imaginary part of spectrum vs.\ real part.}
\label{f:turing}
\end{center}
\end{figure}

These are solutions $w(t,x)=w_*,v(t,x) =v_* \in \R$ to \eqref{e:gkgs} that are time and space independent, and thus solve the algebraic equations arising from vanishing space and time derivatives. We readily compute that the possibilities are $(w_0,v_0) = (1,0)$ and, in case $A \geq 4B^2$,
\begin{align*}
w_\pm = \frac 1 {2A} \left( A \mp \sqrt{A^2-4 A B^2}\right), \qquad  v_\pm = \frac 1 {2B} \left( A \pm \sqrt{A^2-4 A B^2}\right).
\end{align*}
The state $(w_0,v_0)$, with zero vegetation, represents the desert (even though there is non-zero `water'), while the equilibria $(w_{+},v_{+})$ and $(w_{-},v_{-})$ represent co-existing homogeneously vegetated states. At $A=A_{\rm{sn}}=4B^2$, the latter two collapse in a saddle-node bifurcation. The spectrum of the linearization in $(w_*,v_*)$ can be computed from the usual dispersion relation $d(\lambda,\kappa)= 0$, where 
\begin{align*}
 d(\lambda,\kappa)=\det\left(
 \begin{array}{cc}
 -2w_*\kappa^2+\rmi \kappa (C+c)-A-v_*^2 -\lambda & -2w_*v_* \\ 
 v_*^2 & -D\kappa^2+ \rmi \kappa c -B+2w_*v_* -\lambda
 \end{array} \right)
\end{align*}
is obtained from Fourier transform, see $\S$\ref{s:specwtSob}.

An origin of patterns is a (supercritical) Turing-Hopf bifurcation of the steady state $(w_+,v_+)$ that occurs as $A$ decreases from larger values, as shown  in \cite{Sjors}. It is in fact straightforward to study bifurcations of spatially periodic travelling waves as this only involves ODE analysis. As a sidenode on Turing-Hopf bifurcations, we mention that the dynamics of \eqref{e:gkgs} near onset is formally approximated by a complex Ginzburg-Landau equation (see \cite{Sjors}), but the rigorous justification has not been established for quasilinear problems, to our knowledge. 

In order to locate the Turing-Hopf bifurcation, we need to study the spectrum of the linearization in this state.  For illustration, in Figure~\ref{f:turing} we plot the spectrum obtained numerically (using \textsc{Auto} \cite{auto}) from the dispersion relation as the parameter $A$ passes through the aforementioned Turing-Hopf bifurcation. Since the spectrum is unstable after passing the Turing-Hopf instability (e.g. $A=0.43$ in Figure~\ref{f:turing}), the steady state is expected to be unstable under the nonlinear evolution. Indeed, this is the case thanks to Theorem~\ref{t:orb-instab}.

\subsection{Wavetrains}

\begin{figure}
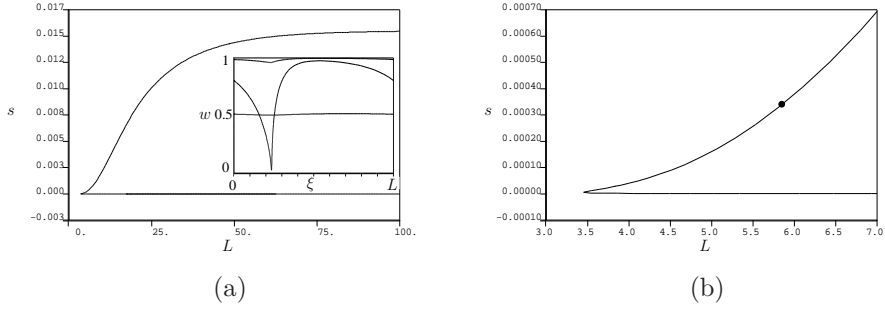

\begin{center}
\begin{tabular}{cc}
\scalebox{0.3}{\input{sample.tex}} & \scalebox{0.3}{\input{sampleZoom.tex}} \\
(a) & (b)
\end{tabular}
\caption{(a) Sample bifurcation diagram of wavetrains for $A=0.02$, $B=C=0.2$, $D=0.001$. At $L\approx 3.45$ a fold occurs, and both branches appear to terminate in a homoclinic bifurcation as $L\to\infty$. The inset shows profiles of solutions at the fold ($w\approx 0.5$) and near $L=80$ on upper and lower ($w\approx1$) branch. (b) Magnification of the bifurcation diagram with bullet marking the location of the sideband instability at $L\approx 5.98$. Solutions on the branch for increasing period are spectrally stable.}
\label{f:samplebif}
\end{center}
\end{figure}

The patterns emerging at the Turing-Hopf bifurcation are periodic wavetrains, which are solutions to \eqref{e:gkgs} of the form $$(w_*,v_*)(t,x) =  (\tilde w, \tilde v )(kx-\omega t),$$
with a $2\pi$-periodic profile $(\tilde w,\tilde v)$. Here $\omega$ is called the frequency and $k$ the wavenumber. As noted in \cite{Sjors}, the existence region of wavetrains to \eqref{e:gkgs} in parameter space extends far from the Turing-Hopf bifurcation and even beyond the saddle-node bifurcation $A=A_{\rm sn}$ of homogeneous equilibria with vegetation. In Figure~\ref{f:samplebif} we plot a branch of wavetrain solutions for $A<A_{\rm sn}$ that appears to terminate in another type of travelling waves: pulses, which are spatially homoclinic orbits.

In order to link to the formulations for travelling waves, let us cast wavetrains as equilibria $(w_*,v_*)(t,x) = (\bw,\bv)(x-ct)$ in the co-moving frame $x- ct$ with speed $c=\frac{\omega}{k}$. The eigenvalue problem of the linearization of \eqref{e:gkgs} in a wavetrain is then given by \eqref{e:ev-lin}, with coefficents of period $L=2\pi/k$ stemming from $(\bw,\bv)$.

\begin{figure}
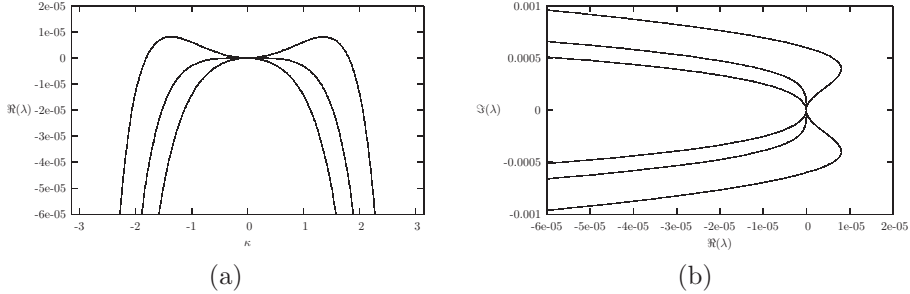

\begin{center}
\begin{tabular}{cc}
\scalebox{0.45}{\input{plot.tex}} & \scalebox{0.45}{\input{plotC.tex}} \\
(a) & (b)
\end{tabular}
\caption{Spectra of the wavetrains for $B=C=0.2$, $D=0.001$, $A=0.02$ before the sideband instability, $L=5.9$ (stable), near to it, $L\approx 5.98$, and after it, $L=6.1$ (unstable). (a) Real part vs.\ linear wavenumber, (b) Imaginary part  vs.\ real part. }
\label{f:turingWT}
\end{center}
\end{figure}

The approach via Fourier transform is less useful, because the linearization is not diagonal in Fourier space due to the $x$-dependent coefficients. As a substitute, one uses the Floquet-Bloch transform, which replaces the eigenvalue problem on $\R$ by a family of eigenvalue problems on the wavelength interval $[0,L]$ (see \S\ref{s:specwtSob}). Specifically, this can be cast as the family of boundary value problems for $\kappa\in[0,2\pi)$ given by \eqref{e:ev-lin} with $\partial_x$ replaced by $\partial_x+\rmi \kappa/L$ and $L$-periodic boundary conditions.

With a curve of spectrum of a wavetrain connected to the origin $\lambda=0$ (due to translation symmetry), a change in its curvature  is a typical destabilization upon parameter variation. This so-called \emph{sideband instability} is illustrated  in Figure~\ref{f:turingWT}, where we plot spectra of wavetrains in \eqref{e:gkgs} passing through a sideband instability as the wavelength $L$ changes. For these computations, we implemented the first order formulation of the dispersion relation numerically in \textsc{Auto} based on the algorithm from \cite{RSS}.

As for the homogeneous steady state, the wavetrains with unstable spectrum (e.g., $L=6.1$ in Figure~\ref{f:turingWT}) are expected to be (orbitally) unstable under the nonlinear evolution of \eqref{e:gkgs}, see Proposition \ref{prop:instab-wavetrain} and Theorem \ref{t:orb-instab}.


\section{Frameworks for well-posedness}\label{s:wellHil}
We formulate the abstract well-posedness results based on maximal regularity and present three concrete frameworks for quasilinear reaction-diffusion systems. In one space dimension we obtain well-posedness in $H^2$ and in $\BUC^2$, and in space dimensions less than or equal to three we have well-posedness in certain Besov spaces. More general problems and further settings are briefly discussed at the end of this section.

\subsection{Well-posedness based on maximal $L^p$-regularity} We formulate the results of \cite{KPW10, Pruess} for abstract quasilinear parabolic problems of the form
\begin{equation}\label{e:quasi-abs}
\partial_t u = A(u) u + F(u),\quad t> 0, \qquad u(0) = u_0,
\end{equation}
in a Hilbert space setting. Let $X_0,X_1$ be Hilbert spaces with $X_1$ continuously and densely embedded into $X_0$. Roughly speaking, $X_0$ is the base space for \eqref{e:quasi-abs} and $A(u(t))$ is an unbounded linear operator on $X_0$ with domain $X_1$. For a fixed number $p\in (1,\infty)$, consider the real interpolation space 
$$\X = (X_0,X_1)_{1-1/p,p}$$
between $X_0$ and $X_1$. This is the phase space in which the solution semiflow for \eqref{e:quasi-abs} acts. It is the analogue to the fractional power domains in the semilinear theory \cite{CD, Henry}. These two types of intermediate spaces differ, in general (with exceptions for $p=2$), but are closely related (see, e.g., \cite[Proposition 4.1.7]{Lunardi2}).  For the general properties of real interpolation spaces we refer to \cite{Bergh-Lof, Lunardi2, Triebel}. At this point we only note that $X_1\subset \X \subset X_0$ and that $\X$ is in general \emph{not} a Hilbert space. Explicit descriptions of $\X$ are available in our concrete settings below, e.g., $H^1 = (L^2, H^2)_{1/2,2}.$

Recall from \cite{EN, Lunardi1} that a densely defined operator $B$ on $X_0$ generates a strongly continuous analytic semigroup if and only if $\|\lambda (\lambda-B)^{-1} \|_{\mathscr L (X_0)}$ is uniformly bounded for $\lambda$  in a left open sector in $\mathbb C$. 

As a consequence of the results in \cite{KPW10, Pruess} we have

\begin{theorem} \label{t:KPW-abstract} Let $p\in (1,\infty)$ and $X_1 \subset \X \subset X_0$ be as above. Assume there is an open set $\mathcal V\subseteq \X$ such that
\begin{itemize}
   \item $F:\V \to X_0$ and $A: \V \to \mathscr{L}(X_1,X_0)$ are Lipschitz on bounded sets;
   \item for each $w_0\in \V$, the operator $A(w_0)$ with domain $X_1$ generates a strongly continuous analytic semigroup on $X_0$.
\end{itemize}
Then \eqref{e:quasi-abs} is locally well-posed in $\V$, with solutions in a strong $L^p$-sense. \end{theorem}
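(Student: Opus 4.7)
The plan is to reduce the theorem to the standard abstract quasilinear well-posedness results of Clément--Li, Prüss, Köhne--Prüss--Wilke \cite{CL, KPW10, Pruess}, whose hypotheses are almost verbatim what is assumed here. The one point requiring verification is the \emph{maximal $L^p$-regularity} of the linearisation $A_0 := A(u_0)$ at a fixed reference point $u_0\in\V$, which is not assumed explicitly but follows for free from the Hilbert space structure of $X_0$.

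First I would fix $u_0 \in \V$ and observe that $A_0$ generates a strongly continuous analytic semigroup on $X_0$ by hypothesis. Since $X_0$ is a Hilbert space, a classical theorem of de Simon (equivalently, bounded analytic semigroups on Hilbert spaces are $\mathcal{R}$-bounded, so the operator sum theorem of Dore--Venni applies) implies that $A_0$ has maximal $L^p$-regularity for every $p\in(1,\infty)$: for every $f\in L^p(0,T;X_0)$ the Cauchy problem $\partial_t v - A_0 v = f$, $v(0)=0$, admits a unique solution in
\[
\mathbb{E}_1(T) := L^p(0,T;X_1) \cap W^{1,p}(0,T;X_0),
\]
with norm estimate uniform in $T$ on bounded intervals. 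The natural trace space of $\mathbb{E}_1(T)$ at $t=0$ is precisely $\X = (X_0,X_1)_{1-1/p,p}$, so the embedding $\mathbb{E}_1(T)\hookrightarrow C([0,T];\X)$ holds and is the link between the solution space and the phase space.

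Next I would set up the fixed-point scheme. Rewrite \eqref{e:quasi-abs} as
\[
\partial_t u - A_0 u = (A(u)-A_0)u + F(u), \qquad u(0)=u_0,
\]
and, for $r,T>0$ to be chosen, let $\Sigma_{r,T}\subset \mathbb{E}_1(T)$ be the closed ball of radius $r$ around the solution $u_*$ of the linear reference problem $\partial_t u_* - A_0 u_* = F(u_0)$, $u_*(0)=u_0$. Define $\Phi:\Sigma_{r,T}\to \mathbb{E}_1(T)$ by letting $\Phi(v)$ be the solution of
\[
\partial_t \Phi(v) - A_0\,\Phi(v) = (A(v)-A_0)\,v + F(v), \qquad \Phi(v)(0) = u_0,
\]
which is well defined by maximal regularity provided the right-hand side lies in $L^p(0,T;X_0)$. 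Using the Lipschitz assumptions on $A$ and $F$ together with the embedding $\mathbb{E}_1(T)\hookrightarrow C([0,T];\X)$, one bounds $\|F(v)\|_{L^p(0,T;X_0)}$ and $\|(A(v)-A_0)v\|_{L^p(0,T;X_0)}$ in terms of $\sup_{t\in[0,T]}\|v(t)-u_0\|_{\X}$ times $\|v\|_{L^p(0,T;X_1)}$, and similarly for differences $\Phi(v_1)-\Phi(v_2)$.

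The main obstacle, and the delicate part of the argument, is the quasilinear term $(A(v)-A_0)v$: to make $\Phi$ a self-map and a strict contraction on $\Sigma_{r,T}$, one needs $A(v)-A_0$ to be small in $\mathscr{L}(X_1,X_0)$ uniformly on $[0,T]$. This is achieved by choosing $T$ small enough so that the trace embedding guarantees $\|v(t)-u_0\|_{\X}$ is small for $v\in\Sigma_{r,T}$, and then invoking the Lipschitz continuity of $A$ on $\V$; the key technical subtlety is that the maximal-regularity constant for $A_0$ must be controlled uniformly as $T\to 0$, which is standard but requires reducing to the problem with zero initial data via the reference solution $u_*$. Once the contraction is set up, the Banach fixed point theorem yields a unique solution $u\in\mathbb{E}_1(T)\hookrightarrow C([0,T];\X)$, and continuous dependence on the initial datum follows from the same estimates applied to the difference of two fixed-point problems with different initial values. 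This is exactly the content of \cite[Thm.~2.1]{KPW10}, applied here in the Hilbert-space setting where maximal $L^p$-regularity is automatic.
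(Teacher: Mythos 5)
Your proposal is correct and follows essentially the same route as the paper: reduce to the abstract quasilinear results of \cite{CL, KPW10, Pruess}, whose only nontrivial hypothesis here is maximal $L^p$-regularity of $A(w_0)$, and observe that this is automatic in a Hilbert space by de Simon's theorem (the paper additionally invokes \cite[Theorems 3.3, 7.1]{Dore00} to reduce to $p=2$ on $\R_+$ with an exponentially decaying semigroup). The fixed-point scheme you sketch is exactly the content of the cited theorem of \cite{KPW10}, which the paper does not reproduce.
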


More precisely, the theorem yields solvability of \eqref{e:quasi-abs} as follows. For each initial value $u_0\in \V$ there is a maximal existence time $t^+(u_0) >0$ and a unique solution 
$u=u(\cdot;u_0)\in C([0,t^+(u_0)),\V)$ of \eqref{e:quasi-abs}, such that $u \in H^{1,p}(J,X_0)\cap L^p(J,X_1)$
for time intervals $J= (0,T)$ with $T < t^+(u_0)$. Here $H^{1,p}(J,X_0)$ denotes a vector-valued Sobolev space, which is defined as in the scalar case. Furthermore, $t^+(u_0)$ is finite only if either $\text{dist}(u(t;u_0), \partial \V)\to 0$ or $\|u(t;u_0)\|_{\X}\to \infty$ as $t\to t^+(u_0)$. The map $t^+:\V\to (0,\infty]$ is lower semicontinuous, and the local solution semiflow, $(t,u_0)\mapsto u(t;u_0)$, is continuous with values in $\V\subseteq \X$.  If $F$ and $A$ are smooth, then the semiflow enjoys smoothness properties as well. We demonstrate this in Proposition \ref{prop:time-T} in the appendix for a neighbourhood of a steady state.

Note that if $A(w_0)$ generates an analytic semigroup for $w_0\in \X$, then the Lipschitz property of $A$ as in the theorem combined with well-known perturbation results for semigroups (see  \cite[Proposition 2.4.2]{Lunardi1}) imply that this is true for any $A(\tilde w_0)$ with $\tilde w_0$ in a small neighbourhood of $w_0$. This gives a candidate for $\V$.

 To verify the assumptions in \cite[Section 2]{KPW10},  \cite[Theorem 3.1]{Pruess} and prove Theorem \ref{t:KPW-abstract} we only need to know that $-A(w_0)$ has  for each $w_0\in \V$ the property of maximal $L^p$-regularity on finite time intervals $J$. But in \emph{Hilbert spaces} this already follows from the assumed  generator property of $A(w_0)$. Indeed, by \cite[Theorems 3.3, 7.1]{Dore00} it suffices to consider the case $p=2$, $J=\R_+$ and that the semigroup generated by $A(w_0)$ is exponentially decaying. In this situation maximal $L^2$-regularity follows from \cite{DeSimon} (see also \cite[Theorem 1.6]{Pruess} for the short proof using Plancherel's theorem).

\subsubsection{One space dimension: well-posedness in $H^2$}\label{s:wellSob} For $u(t,x)\in \R^N$ we apply the  abstract result Theorem \ref{t:KPW-abstract} to the reaction-diffusion system 
\begin{equation}\label{e:RDS-1D}
u_t = (a(u) u_x)_x + f(u,u_x), \qquad t>0, \qquad x\in \R.
\end{equation}
To obtain a simple setting with familiar function spaces which is at the same time directly linked to $L^2$-spectral theory, we work with $X_0 = H^1= H^1(\R)^N$ as a base space. In one space dimension (and only there) this is possible since $H^1$ is even an algebra, i.e., $uw\in H^1$ and $\|uv\|_{H^1}\leq C\|u\|_{H^1}\|v\|_{H^1}$ for $u,w\in H^1$. 

We start with the case when the nonlinearities in \eqref{e:RDS-1D} are everywhere defined. We emphasize that $a$ does not have to be symmetric, and that $a, f$ may be less regular than actually stated.

\begin{theorem}\label{t:quasiRDSsmooth} Assume $a:\R^N \to \R^{N\times N}$ is $C^4$ such that $a(\zeta)\in \R^{N\times N}$ is positive definite for each $\zeta\in \R^N$, and that $f: \R^N\times \R^N \to \R^N$ is $C^3$ with $f(0,0) = 0$.

Then  \eqref{e:RDS-1D} is locally well-posed in the phase space $\X = H^2$. The solutions belong to $H^1(J, H^1(\R))\cap L^2(J, H^3(\R))\cap C(\overline{J}, H^2(\R))$
on time intervals $J = (0,T)$ away from the maximal existence time. \end{theorem}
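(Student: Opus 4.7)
The plan is to reduce to Theorem~\ref{t:KPW-abstract} by choosing appropriate Hilbert spaces. I would take $p=2$ and
$$X_0 := H^1(\R)^N, \qquad X_1 := H^3(\R)^N,$$
so that the real interpolation space is $(X_0,X_1)_{1/2,2} = H^2(\R)^N = \X$. Since $a(\zeta)$ is positive definite for every $\zeta \in \R^N$, no pointwise constraint on the solution is required and $\V := \X$ works. Casting \eqref{e:RDS-1D} into the abstract form \eqref{e:quasi-abs} via
$$A(u)v := (a(u)v_x)_x = a(u)v_{xx} + \bigl(Da(u)\cdot u_x\bigr)v_x, \qquad F(u) := f(u,u_x),$$
two assertions remain: (i) $A : \V \to \mathscr{L}(X_1,X_0)$ and $F : \V \to X_0$ are Lipschitz on bounded sets; (ii) $A(w_0)$ generates a strongly continuous analytic semigroup on $X_0$ for every $w_0 \in \V$.

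For (i), the essential input is the one-dimensional Banach algebra property $H^1(\R) \cdot H^1(\R) \hookrightarrow H^1(\R)$, together with $H^1 \hookrightarrow \BUC$ and hence $H^2 \hookrightarrow \BUC^1$. Combined with the regularity assumptions $a \in C^4$ and $f \in C^3$, standard Moser-type composition estimates show that $u \mapsto a(u)$ is Lipschitz from bounded subsets of $H^2$ into $\BUC^1$, that $u \mapsto Da(u)\cdot u_x$ is Lipschitz from bounded subsets of $H^2$ into $H^1$, and that $u \mapsto f(u,u_x)$ is Lipschitz from bounded subsets of $H^2$ into $H^1$. These bounds, together with the algebra property, imply the required Lipschitz estimates for the operator norm of $A(u_1) - A(u_2)$ and for $\|F(u_1) - F(u_2)\|_{H^1}$.

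The verification of (ii) is the main technical obstacle. Fix $w_0 \in \V$. Because $w_0 \in H^2 \hookrightarrow \BUC^1$ vanishes at infinity and $a(0)$ is positive definite, the coefficient $a(w_0)$ is uniformly positive definite with $\BUC^1$ regularity. Classical strongly elliptic theory (see e.g.\ \cite{Lunardi1}) yields an analytic semigroup generated by $A(w_0)$ on the base space $L^2$ with domain $H^2$. To shift to the $H^1$/$H^3$ setting I would conjugate by the isomorphism $\Lambda := (1 - \partial_x^2)^{1/2} : H^1 \to L^2$: the conjugate $\Lambda A(w_0) \Lambda^{-1}$ on $L^2$ has domain $H^2$ and differs from $A(w_0)$ only by the commutator $[\Lambda, a(w_0)\partial_x^2]\Lambda^{-1}$, a first-order operator with $\BUC^0$ coefficients. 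This is a relatively bounded lower-order perturbation that preserves the sectorial resolvent estimates, so $\Lambda A(w_0) \Lambda^{-1}$ generates an analytic semigroup on $L^2$, and pulling back by $\Lambda^{-1}$ yields the desired analytic semigroup for $A(w_0)$ on $H^1$ with domain $H^3$.

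With (i) and (ii) established, maximal $L^2$-regularity of $-A(w_0)$ is automatic in the Hilbert setting by \cite{DeSimon}, and Theorem~\ref{t:KPW-abstract} applies directly. It yields local well-posedness of \eqref{e:RDS-1D} in $\X = H^2$ with solutions in $H^1(J,H^1) \cap L^2(J,H^3) \cap C(\overline{J},H^2)$, as asserted.
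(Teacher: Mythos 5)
Your proposal follows the paper's overall strategy exactly: reduce to Theorem~\ref{t:KPW-abstract} with $X_0=H^1$, $X_1=H^3$, $p=2$, so that $\X=(H^1,H^3)_{1/2,2}=H^2$; verify the Lipschitz properties of the superposition operators via the algebra property of $H^1$ and $H^2\subset\BC^1$ (the paper isolates this as Lemma~\ref{lem:superpos-H1}); establish the generator property on $L^2$ from elliptic theory; and let maximal $L^2$-regularity come for free in the Hilbert-space setting. The one step where you genuinely diverge is the transfer of the generator property from $L^2$ (domain $H^2$) to $H^1$ (domain $H^3$). The paper takes the interpolation route: since $H^1=(L^2,H^2)_{1/2,2}$, the part of $A_{L^2}(w_0)$ in $H^1$ is again a generator by \cite[Theorem 5.2.1]{Lunardi2}, and its domain $\{u\in H^2: A_{L^2}(w_0)u\in H^1\}$ is identified with $H^3$ by an elementary computation using the algebra property (as in the proof of Lemma~\ref{l:iso}). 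You instead conjugate by $\Lambda=(1-\partial_x^2)^{1/2}$ and treat $[\Lambda,A(w_0)]\Lambda^{-1}$ as a relatively bounded perturbation. This can be made to work, but the decisive estimate is not the formal order count you give: since $a(w_0)$ is only $\BC^1$ and not a smooth symbol, the statement that $[\Lambda,a(w_0)]\partial_x^2\Lambda^{-1}$ is of first order requires the Calder\'on--Coifman--Meyer (or Kato--Ponce) commutator theorem, namely that $[\Lambda,m]$ is bounded on $L^2$ for a Lipschitz multiplier $m$. Without that gain of one derivative, the crude bound $\|[\Lambda,a(w_0)]\partial_x^2\Lambda^{-1}v\|_{L^2}\leq C\|v\|_{H^2}$ only yields a relative bound of order one, which is not small enough for the perturbation theorem; so this input must be stated and cited. (Also note that the difference $\Lambda A(w_0)\Lambda^{-1}-A_{L^2}(w_0)$ contains the commutator with the first-order term $(a(w_0))_x\partial_x$ as well; that piece is harmless, since $[\Lambda,b]:H^1\to L^2$ is bounded by the algebra property alone.) In short: your route is viable and more hands-on, at the price of a nontrivial harmonic-analysis lemma; the paper's interpolation argument avoids any commutator analysis and reduces the domain identification to pure algebra.
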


\begin{proof}
We choose $X_0 = H^1$, $X_1 = H^3$ and $p = 2$. Then $\X = (H^1,H^3)_{1/2,2} = H^2$, see \cite[Remark 2.4.2/2]{Triebel}.  Define $A$ and $F$ by $A(u)v = (a(u)v_x)_x$ and $F(u) = f(u,u_x)$. Then $F:H^2\to H^1$ and $A:H^2\to \mathscr{L}(H^3, H^1)$ are Lipschitz on bounded sets by Lemma \ref{lem:superpos-H1}. For the generator property, let $w_0\in H^2$ be arbitrary. Denote by $A_{L^2}$ the realization of $A(w_0)$ on $L^2$, with domain $H^2$. Since $w_0, a(w_0) \in \BC^1$ by Sobolev's embedding $H^1\subset \BC$,  it follows from \cite[Corollary 9.5]{AHS94} that the operator $A_{L^2}$ generates an analytic $C_0$-semigroup on $L^2$. Next, let $A_{H^1}$ be the realization of $A(w_0)$ on $H^1$, i.e., the restriction of $A_{L^2}$ to $H^1$. Since $H^1 = (L^2,H^2)_{1/2,2}$ (see again \cite{Triebel}), it follows from \cite[Theorem 5.2.1]{Lunardi2} that $A_{H^1}$ with domain $D(A_{H^1}) = \{u\in H^2:A_{L^2}u \in H^1\}$ generates an analytic $C_0$-semigroup as well. Using the algebra property of $H^1$, it is elementary 
to check that $D(A_{H^1}) = H^3$ (see the proof of Lemma \ref{l:iso} in the appendix). Thus Theorem \ref{t:KPW-abstract} applies.
\end{proof}

\begin{Remark}\label{r:smooth}
Employing, e.g., \emph{Angenent's parameter trick} (see \cite[Theorem 5.1]{Pruess} and \cite{EPS}), one can show that for smooth nonlinearities the solutions of \eqref{e:RDS-1D} are smooth in space and time.
\end{Remark}

When investigating the stability of a non-localized travelling wave with respect to localized perturbations, one is lead to a variant of \eqref{e:RDS-1D} with $x$-dependent nonlinearities. Furthermore, in many situations the nonlinearities are not everywhere defined on $\R^N$, or the leading coefficient $a$ is positive definite only in a subset of $\R^N$. For instance, this is the case for the Gray-Scott-Klausmeier model \eqref{e:gkgs}, where the focus lies on perturbations of travelling wave solutions in the parabolic regime $w> 0$.

For a general formulation, let $\overline{u}\in \BC^2(\R,\R^N)$ be a steady state of \eqref{e:RDS-1D}, i.e.,
\begin{equation}\label{e:steady}
(a(\overline{u}) \overline{u}_x)_x + f(\overline{u}, \bu_x) = 0.
\end{equation}
Then $\overline{u} + u$ solves \eqref{e:RDS-1D} for a perturbation $u$ if and only if $u$ solves
\begin{equation}\label{e:RDS-1D-x}
u_t = (a(\overline{u}+u) u_x)_x + ( a(\overline{u}+u) \overline{u}_x)_x + f(\overline{u}+u,\bu_x + u_x).
\end{equation}
For this perturbative setting we have the following variant of Theorem \ref{t:quasiRDSsmooth}. Here and in the following, the image of $\overline{u}$ is meant to be the set $\{\overline{u}(x):x\in\R\}$.

\begin{theorem} \label{t:quasiRDSpert} Let $\bu \in \BC^2(\R,\R^N)$ satisfy \eqref{e:steady}, and let $U_1,U_2\subseteq \R^N$ be open neighbourhoods of the closure of the images of $\bu$ resp. $\bu_x$. Assume $a: U_1\to \R^{N\times N}$ is $C^4$ such that $a(\zeta)$ is positive definite for any $\zeta\in U_1$, and $f:U_1\times U_2\to \R^N$ is $C^3$.

Then there is an open neighbourhood $\mathcal V$ of the zero function in $H^2$ such that \eqref{e:RDS-1D-x} is locally well-posed in $\mathcal V$. If $U_1 = U_2 = \R^N$, then one can take $\mathcal V= H^2$.
\end{theorem}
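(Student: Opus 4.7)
The plan is to mimic the proof of Theorem \ref{t:quasiRDSsmooth} in the perturbation variable $u$, using the steady-state equation \eqref{e:steady} to cancel the non-decaying contributions that would otherwise prevent the right-hand side from landing in $X_0 = H^1$. First I would reduce to globally defined nonlinearities by a cutoff: since $\bu \in \BC^2$, the sets $\overline{\bu(\R)} \subset U_1$ and $\overline{\bu_x(\R)} \subset U_2$ are bounded with positive distance to $\partial U_1$ and $\partial U_2$ respectively, so a smooth partition of unity yields extensions $\tilde a \in C^4(\R^N,\R^{N\times N})$ (positive definite everywhere, e.g.\ equal to the identity outside a large ball) and $\tilde f \in C^3(\R^N \times \R^N,\R^N)$ agreeing with $a, f$ on neighbourhoods of $\overline{\bu(\R)}$ and $\overline{\bu(\R)} \times \overline{\bu_x(\R)}$.

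Next, I would apply Theorem \ref{t:KPW-abstract} with $X_0 = H^1$, $X_1 = H^3$, $p = 2$, and $\X = (H^1,H^3)_{1/2,2} = H^2$, taking
\[
A(u)v := (\tilde a(\bu + u) v_x)_x, \qquad F(u) := \big((\tilde a(\bu+u) - \tilde a(\bu)) \bu_x\big)_x + \tilde f(\bu+u, \bu_x + u_x) - \tilde f(\bu, \bu_x).
\]
With this choice, using \eqref{e:steady} to note $(\tilde a(\bu)\bu_x)_x + \tilde f(\bu,\bu_x) = 0$, the equation \eqref{e:RDS-1D-x} (with $a,f$ replaced by $\tilde a,\tilde f$) is exactly $u_t = A(u)u + F(u)$. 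The generator property for $A(u_0)$ on $X_0 = H^1$ follows verbatim from the argument in Theorem \ref{t:quasiRDSsmooth}, since for any $u_0 \in H^2$ we have $\bu + u_0 \in \BC^1(\R,\R^N)$ (via $\bu \in \BC^2$ and $H^2 \subset \BC^1$) and $\tilde a(\bu + u_0)$ is bounded, $\BC^1$, and uniformly positive definite. The Lipschitz property of $A: H^2 \to \mathscr L(H^3, H^1)$ on bounded sets follows from Lemma \ref{lem:superpos-H1} combined with the fact that multiplication by a fixed $\BC^1$ function preserves $H^1$.

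The main technical point is checking that $F: H^2 \to H^1$ is Lipschitz on bounded sets. The construction of $F$ ensures $F(0) = 0$ and that each summand vanishes at $u = 0$, so a Taylor expansion $\tilde a(\bu + u) - \tilde a(\bu) = \int_0^1 D\tilde a(\bu + su) u \, ds$ and an analogous expansion for the $\tilde f$-difference write $F(u)$ as a finite sum of terms of the form (smooth function of $\bu + su$) times (first- or second-order factor in $u, u_x, u_{xx}$), possibly multiplied by $\bu_x$ or $\bu_{xx}$. Each factor involving $\bu$ and its derivatives lies in $\BC^1$, and the composition-type estimates in Lemma \ref{lem:superpos-H1}, together with the algebra structure of $H^1$ and the multiplier property $\BC^1 \cdot H^1 \subset H^1$, show that the products lie in $H^1$ with the desired Lipschitz dependence on $u \in H^2$. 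Choosing $\mathcal V \subset H^2$ as a sufficiently small open neighbourhood of $0$, so that the Sobolev embedding $H^2 \subset \BC^1$ forces $\bu + u$ and $\bu_x + u_x$ to take values in the region where $\tilde a = a$ and $\tilde f = f$, Theorem \ref{t:KPW-abstract} gives the claimed well-posedness, and the produced solutions solve the original \eqref{e:RDS-1D-x}. When $U_1 = U_2 = \R^N$ the extension step is vacuous and no smallness of $u$ is needed to identify the two equations, so $\mathcal V = H^2$ is admissible.
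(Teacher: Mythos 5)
Your proof is correct and follows essentially the same route as the paper: Theorem \ref{t:KPW-abstract} with $X_0=H^1$, $X_1=H^3$, $p=2$, the superposition estimates of Lemma \ref{lem:superpos-H1} (using the steady-state equation to get $F(0)=0$), and the generator property from \cite[Corollary 9.5]{AHS94} plus interpolation. The only cosmetic difference is your preliminary cutoff extension of $a,f$; the paper instead handles the local definedness by shrinking $\V$ directly, and your explicit subtraction of the steady-state terms in $F$ agrees with the paper's $F$ by \eqref{e:steady}.
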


\begin{proof} Let again $X_0 = H^1$, $X_1 = H^3$ and $p = 2$, such that $\X = H^2$.  Define
\begin{equation}\label{e:superpos-proof}
A(u)v = (a(\overline{u}+u) v_x)_x, \qquad F(u) =   ( a(\overline{u}+u) \overline{u}_x)_x + f(\overline{u}+u,\bu_x + u_x).
\end{equation}
Using $F(0) = 0$, Lemma \ref{lem:superpos-H1} yields $\V \subseteq H^2$ such that $F:\V\to H^1$ and $A:\V\to \mathscr L(H^3, H^1)$ are Lipschitz on bounded sets. If $\V$ is sufficiently small, then for each $w_0\in \mathcal V$ the leading coefficient $a(\overline{u}+w_0)$ of $A(w_0)$ is positive definite, uniformly in $x\in \R$. Thus as in the proof of  Theorem \ref{t:quasiRDSsmooth} it follows from \cite[Corollary 9.5]{AHS94} and an interpolation argument that $A(w_0)$ with domain $H^3$ has the required generator property on $H^1$ to apply Theorem \ref{t:KPW-abstract}.
\end{proof}

\subsubsection{Well-posedness in space dimensions $n\leq 3$} \label{s:well} For simplicity, on $\R^n$ we consider quasilinear reaction-diffusion-advection problems (using sum convention)
\begin{equation}\label{e:quasi-Rn}
 u_t =  \partial_i (a_{ij}(u) \partial_j u)  +  c_i \partial_i u + f(u), \qquad x\in \R^n.
\end{equation}
Here, essentially, $a_{ij}:\R^{N} \to \R^{N\times N}$, $c_i\in \R^{N\times N}$ for $i,j = 1,...,n$ and $f:\R^{N}\to \R^N$.
The approach of the previous subsection works in any dimension if one takes $X_0 = H^k(\R^n)$ with $k > \frac{n}{2}$ as a base space, since then $H^k$ is an algebra and the superposition operators are Lipschitz as before.

We present another functional analytic setting with $X_0=L^2$ as a base space, for which Theorem \ref{t:KPW-abstract}  applies to  \eqref{e:quasi-Rn} in space dimensions $n\leq 3$. The price one has to pay in the maximal $L^p$-regularity approach is that the phase space $\mathcal X = (L^2,H^2)_{1-1/p,p}$ becomes slightly more complicated to describe. It is the $N$-fold product $B_{2,p}^{s}$ of a \emph{Besov space} $B_{2,p}^{s}(\R^n)$, with $s >0$ and $p\in (1,\infty)$. For $s\notin \N$, it follows from \cite[Theorem 2.6.1]{Triebel2} that an equivalent norm for this space is given by 
$$\|u\|_{B_{2,p}^{s}}  = \|u\|_{H^{k}} + \sum_{|\alpha|\leq k} \Big ( \int_{|h|\leq 1} |h|^{-(s-k)p -n} \|D^\alpha u(\cdot + h)-D^\alpha u(\cdot)\|_{L^2}^p \,dh\Big)^{1/p},$$
where $k$ is the largest integer smaller than $s$. The Besov spaces are closely related to the more common Bessel-potential spaces $H^{s}$.  For any $\varepsilon > 0$ we have the dense inclusions $H^{s+\varepsilon} \subset B_{2,p}^{s} \subset H^{s-\varepsilon}$. However, $B_{2,p}^{s} = H^s$ if and only if $p=2$, and furthermore $B_{2,p}^{s}$ is a Hilbert space only for $p = 2$. Essential for the applications are the Sobolev embeddings
\begin{equation}\label{e:Sob-Besov}
B_{2,p}^s(\R^n) \subset \BC(\R^n)\quad \text{for }s > \frac{n}{2}, \quad B_{2,p}^s(\R^n)\subset L^q(\R^n)\quad  \text{for }s \geq \frac{n}{2} - \frac{n}{q}>0.
\end{equation}
These are a consequence of $B_{2,p}^{s} \subset H^{s-\varepsilon}$ and the corresponding embeddings for the $H$-spaces. For these and many more properties of $B$-spaces we refer to \cite{Triebel}.

As above we consider a perturbative setting. Analogous to \eqref{e:RDS-1D-x}, for perturbations $u$ of a steady state $\bu\in \BC^2(\R^n,\R^N)$ of \eqref{e:quasi-Rn}, one is lead to 
\begin{equation}\label{e:quasi-Rn-x}
\partial_t u = \partial_i (a_{ij}(\overline{u}+ u) \partial_j u)  +   \partial_i (a_{ij}(\overline{u}+ u) \partial_j \overline{u})  + c_i \partial_i (u +\overline{u})  + f(\overline{u}+u).
\end{equation}
Note that the following well-posedness result in particular applies to \eqref{e:quasi-Rn} when setting $\overline{u} = 0$ and assuming $f(0) = 0$. Again no symmetry properties of the diffusion coefficients $(a_{ij})$ are required.

\begin{theorem} \label{t:quasiRDS-Rn} Let $n= 1,2,3$. Let $\bu\in \BC^2(\R^n,\R^N)$ be a steady state of \eqref{e:quasi-Rn}, and let $U\subseteq \R^N$ be an open neighbourhood of the closure of its image. For all $i,j = 1,...,n$, assume that $c_i\in \R^{N\times N}$ is constant, that $a_{ij}:U \to \R^{N\times N}$ and $f: U\to \R^N$ are $C^2$, and that $a_{ij}(\zeta)$ is positive definite for any $\zeta \in U$. 

Then for all sufficiently large $p\in (2,\infty)$ there is an open neighbourhood $\mathcal V$ of the zero function in $B_{2,p}^{2-2/p} = B_{2,p}^{2-2/p}(\R^n)^N$ such that \eqref{e:quasi-Rn-x} is locally well-posed in $\mathcal V$. The solutions belong to $H^{1,p}(J, L^2)\cap L^p(J, H^2)\cap C(\overline{J}, \mathcal V)$ on time intervals $J$ away from the maximal existence time. If $U = \R^N$, then one can take $\mathcal V = B_{2,p}^{2-2/p}$.
\end{theorem}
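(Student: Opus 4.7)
The plan is to reduce to the abstract framework of Theorem \ref{t:KPW-abstract} by choosing $X_0 = L^2(\R^n)^N$, $X_1 = H^2(\R^n)^N$, so that by reiteration in real interpolation the intermediate space is $\mathcal{X} = (L^2, H^2)_{1-1/p,p} = B_{2,p}^{2-2/p}$ (see \cite{Triebel}). The first task is to fix $p$ large enough so that $2 - 2/p > n/2$, which for $n \leq 3$ requires, e.g., $p > 4$; by \eqref{e:Sob-Besov} this gives the continuous embedding $\mathcal{X} \hookrightarrow \BC(\R^n)^N$. I then define
\[
A(u)v = \partial_i(a_{ij}(\bu+u)\partial_j v), \qquad F(u) = \partial_i(a_{ij}(\bu+u)\partial_j \bu) + c_i\partial_i(u+\bu) + f(\bu+u),
\]
and choose $\mathcal{V}\subseteq\mathcal{X}$ to be an open neighbourhood of $0$ such that $\bu(x)+u(x)$ stays in $U$ uniformly in $x$ for $u\in \mathcal{V}$ (using the embedding into $\BC$), and small enough that $a_{ij}(\bu+u)$ remains uniformly positive definite.

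The main technical step is verifying that $A:\mathcal{V}\to \mathscr{L}(H^2,L^2)$ and $F:\mathcal{V}\to L^2$ are Lipschitz on bounded sets. I would expand
\[
A(u)v = a_{ij}(\bu+u)\,\partial^2_{ij}v + a'_{ij}(\bu+u)\bigl(\partial_i \bu + \partial_i u\bigr)\partial_j v.
\]
The first term is the product of a $\BC$-function (via $\mathcal{X}\hookrightarrow \BC$) with an $L^2$-function, and Lipschitz dependence on $u$ follows from a mean value argument together with the $C^2$ regularity of $a_{ij}$. The delicate term is the product $(\partial_i u)(\partial_j v)$: since $\partial_i u \in B_{2,p}^{1-2/p}$ and by Sobolev embedding $B_{2,p}^{1-2/p}\hookrightarrow L^{q}$ for $1 - 2/p \geq n/2 - n/q$, while $\partial_j v \in H^1\hookrightarrow L^{q^*}$ for the dual Sobolev exponent, one chooses $p$ large enough that Hölder's inequality gives the product in $L^2$; the restriction $n\leq 3$ is precisely what allows such compatible exponents to exist. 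The terms in $F$ are handled analogously, making repeated use of the embedding $\mathcal{X}\hookrightarrow \BC$ for the superpositions and of Sobolev embeddings $\mathcal{X}\hookrightarrow L^q$ for the advection and reaction terms. I expect this will be the main obstacle, and it is where large $p$ really enters.

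To verify the generator property for any $w_0\in\mathcal{V}$, I observe that the coefficients $a_{ij}(\bu+w_0)$ lie in $\BC(\R^n)$ (via the embedding into $\BC$) and are strongly elliptic uniformly in $x$. Hence \cite[Corollary 9.5]{AHS94} applies to yield that the $L^2$-realization of $A(w_0)$ with domain $H^2$ generates a strongly continuous analytic semigroup on $L^2$; the lower-order terms $c_i\partial_i$ and those contributed by linearising $F$ are bounded perturbations from $H^1$ to $L^2$ and do not affect the generator property. Combining these facts, Theorem \ref{t:KPW-abstract} applies and yields local well-posedness of \eqref{e:quasi-Rn-x} in $\mathcal{V}$ with the claimed regularity class. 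In the special case $U = \R^N$ there is no pointwise constraint beyond positive definiteness, which holds globally, so one may take $\mathcal{V} = B_{2,p}^{2-2/p}$.
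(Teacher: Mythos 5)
Your proof follows essentially the same route as the paper: the same choice $X_0=L^2$, $X_1=H^2$, $\X=B_{2,p}^{2-2/p}=(X_0,X_1)_{1-1/p,p}$, the same splitting into $A$ and $F$, the Lipschitz estimates via the embeddings of $B_{2,p}^{2-2/p}$ into $\BC$ and into $L^q$ combined with H\"older's inequality for the quadratic gradient terms (the paper's Lemma \ref{lem:superpos-n} uses the symmetric splitting $L^4\cdot L^4\subset L^2$ via $B_{2,p}^{2-2/p}, H^2\subset H^{1,4}$, which is equivalent to your exponent bookkeeping for $n\le 3$), and \cite[Corollary 9.5]{AHS94} for the generator property before invoking Theorem \ref{t:KPW-abstract}. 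The one imprecision is that you apply \cite[Corollary 9.5]{AHS94} with leading coefficients merely in $\BC(\R^n)$, whereas that result needs uniformly (H\"older) continuous top-order coefficients so that the $L^2$-realization has domain exactly $H^2$; the paper obtains this from the stronger embedding $B_{2,p}^{2-2/p}\subset \BC^\sigma$ for some $\sigma>0$ when $n\le 3$ and $p$ is large, and your argument is repaired by the same observation.
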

\begin{proof} The choice $X_0 = L^2$ and $X_1 = H^2$ leads to $B_{2,p}^{2-2/p} =\X =  (X_0,X_1)_{1-1/p,p}$ for $p\in (1,\infty)$, see \cite[Remark 2.4.2/4]{Triebel}. Let $A(u)v =  \partial_i (a_{ij}(\overline{u}+ u) \partial_j v)$, and denote by  $F(u)$ the remaining terms on the right-hand side of \eqref{e:quasi-Rn-x}. The Lipschitz properties of $A$ and $F$ on a neighbourhood $\V$ of zero follow from Lemma \ref{lem:superpos-n}. For $w_0\in \mathcal V$ the operator $A(w_0)$ is elliptic, the coefficients are bounded and the leading coeffient is uniformly H\"older continuous, since $B_{2,p}^{2-2/p}$ even embeds into $\BC^\sigma$ for some $\sigma > 0$ if $n\leq 3$ and $p$ is large, see \cite[Theorem 2.8.1]{Triebel}. Now the generator 
property on $L^2$ follows again from \cite[Corollary 9.5]{AHS94}.
\end{proof}

\subsection{Well-posedness based on maximal H\"older regularity} We formulate the well-posedness result of \cite[Chapter 8]{Lunardi1} for abstract quasilinear parabolic problems
\begin{equation}\label{e:quasi-abs-2}
\partial_t u = A(u) u + F(u),\quad t> 0, \qquad u(0) = u_0.
\end{equation}
The approach of \cite{Lunardi1} is based on maximal H\"older regularity (see also \cite[Chapter III.2]{AmannBook} for the general linear theory). It also covers fully nonlinear problems and does not take into account the quasilinear structure of \eqref{e:quasi-abs-2}. It has the big advantage to be applicable in arbitrary Banach spaces $X_0$, while in applications maximal $L^p$-regularity is usually restricted to reflexive Banach spaces, excluding spaces of continuous functions. Moreover, the phase space equals the domain of the linearized operator, which is usually easier to describe than an interpolation space.

The following well-posedness result for \eqref{e:quasi-abs-2} is a consequence of \cite[Theorem 8.1.1, Proposition 8.2.3, Corollary 8.3.3]{Lunardi1}. 

\begin{theorem} \label{t:lun-abstract} Let $X_0, X_1$ be arbitary Banach spaces such that $X_1$ is continuously and densely embedded in $X_0$. Let $\V \subseteq \X:= X_1$ be open, define $\mathcal F(u) = A(u)u + F(u)$ and suppose that
\begin{itemize}
\item $\mathcal F\in C^1(\V,X_0)$ with locally Lipschitz derivative;
\item for each $w_0\in \V$, the operator $\mathcal F'(w_0)$ with domain $X_1$ generates a strongly continuous analytic semigroup on $X_0$, and $\|u\|_{X_0} + \|\mathcal F'(w_0)u\|_{X_0}$ defines an equivalent norm on $X_1$.
\end{itemize}
Then \eqref{e:quasi-abs-2} is locally well-posed in $\V$, and solutions are classical in time.
\end{theorem}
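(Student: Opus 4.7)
The plan is to reduce to the fully nonlinear framework of \cite[Chapter 8]{Lunardi1} and invoke the cited results directly, since the two hypotheses are tailored precisely to their assumptions. First, fix $u_0\in \V$ and set $A_0:=\mathcal F'(u_0)$. By hypothesis $A_0$ generates an analytic semigroup on $X_0$, its domain is $X_1$ with equivalent graph norm, and therefore $A_0$ enjoys maximal H\"older regularity in the sense of \cite[Chapter 4]{Lunardi1}: for a suitable Hölder exponent $\alpha\in(0,1)$ the linear Cauchy problem $\partial_t v -A_0 v = g$, $v(0)=0$, has a unique solution of optimal regularity in a little-Hölder or weighted Hölder space, continuously depending on $g$.

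Next I would write $u=u_0+w$ to translate the problem to vanishing initial datum. Then \eqref{e:quasi-abs-2} becomes
\begin{equation*}
\partial_t w - A_0 w \;=\; \Phi(w) + \mathcal F(u_0),\qquad w(0)=0,
\end{equation*}
with $\Phi(w):=\mathcal F(u_0+w)-\mathcal F(u_0)-A_0 w$. Since $\mathcal F\in C^1$ with locally Lipschitz derivative, $\Phi$ is of class $C^1$ with $\Phi(0)=0$, $\Phi'(0)=0$, and its Lipschitz constant on a ball of radius $\rho$ in $X_1$ is $O(\rho)$.

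The heart of the proof is then a contraction mapping argument in an appropriately chosen closed ball of the maximal regularity space for $A_0$. Let $E_T$ denote this space on $[0,T]$ (essentially $C^\alpha([0,T],X_1)\cap C^{1+\alpha}([0,T],X_0)$ with the right regularity at $t=0$). The solution operator of the linear problem sends $g\in C^\alpha([0,T],X_0)$ to $v\in E_T$ with uniform bound. Composing with $w\mapsto \Phi(w)+\mathcal F(u_0)$ defines a map $\Psi:\overline B_\rho(0)\subset E_T\to E_T$; for $\rho$ small and $T$ small the combined smallness of the Lipschitz constant of $\Phi$ and of the norm of the affine contribution $\mathcal F(u_0)$ over a short interval makes $\Psi$ a strict contraction that preserves $\overline B_\rho(0)$. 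The unique fixed point yields a classical solution $u=u_0+w$ of \eqref{e:quasi-abs-2} on $[0,T]$, and continuity with respect to $u_0$ follows from the parameter-dependent form of the contraction estimate.

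The main technical obstacle, and the reason one cannot simply quote a semilinear result, is the compatibility issue at $t=0$: for a generic $u_0\in \V$ the term $\mathcal F(u_0)$ is only an element of $X_0$, so the right-hand side $\Phi(w)+\mathcal F(u_0)$ cannot be taken H\"older-continuous up to $t=0$ with value $0$. This is exactly what forces the use of weighted or little-Hölder spaces in \cite[Chapter 4]{Lunardi1}, in which the linear map $g\mapsto v$ is bounded precisely when the trace compatibility is relaxed. Combining these function spaces with the local Lipschitz property of $\mathcal F'$ is done in \cite[Theorem 8.1.1]{Lunardi1} (existence, uniqueness), while \cite[Proposition 8.2.3]{Lunardi1} and \cite[Corollary 8.3.3]{Lunardi1} supply the continuous dependence and the classical-in-time regularity of the solution; the statement follows.
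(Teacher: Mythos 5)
Your proposal is correct and follows essentially the same route as the paper: the paper proves Theorem \ref{t:lun-abstract} simply by invoking \cite[Theorem 8.1.1, Proposition 8.2.3, Corollary 8.3.3]{Lunardi1}, which is exactly the reduction you carry out, and your sketch of the linearization at $u_0$, the weighted-H\"older maximal regularity, and the fixed-point argument accurately reflects how those cited results are established. The only cosmetic difference is that you additionally unpack Lunardi's proof, including the correct identification of the compatibility issue at $t=0$ that forces the weighted spaces $\BUC^\alpha_\alpha$.
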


As already mentioned, the phase space $\X$ is now a subset of $X_1$ and not of an intermediate space between $X_0$ and $X_1$. Well-posedness is similar as for Theorem \ref{t:KPW-abstract}.  The maximal existence time is lower semicontinuous and the solution semiflow is continuous with values in $\V$. For each $\alpha\in (0,1)$ and an initial value $u_0\in \V$, one obtains a unique maximal solution $u$ of \eqref{e:quasi-abs-2} such that $u\in \BUC^{1+\alpha}_\alpha([0,T],X_0)\cap \BUC_\alpha^\alpha([0,T], X_1)$ for $T<t^+(u_0)$. Here $\BUC_\alpha^\alpha$ is a weighted H\"older space, see \cite[Chapter III.2]{AmannBook} and \cite[Example 3]{PSZ09-2}. (It is slightly confusing that these spaces differ from the ones in \cite{Lunardi1} denoted by $C_\alpha^\alpha$, but $\BUC_\alpha^\alpha$ is indeed the regularity obtained in \cite[Theorem 8.1.1]{Lunardi1}).

Theorem \ref{t:lun-abstract} applies to \eqref{e:RDS-1D}, \eqref{e:RDS-1D-x} and \eqref{e:quasi-Rn-x} under similar assumptions as in the Theorems \ref{t:quasiRDSsmooth}, \ref{t:quasiRDSpert} and \ref{t:quasiRDS-Rn}, with different phase spaces. In particular, instead of a Besov space one obtains $H^2$ as a phase space in the setting of Theorem \ref{t:quasiRDS-Rn}. We do not formulate the precise results and rather consider a setting for reaction-diffusion systems which is not covered by the approach of Theorem \ref{t:KPW-abstract}.

\subsubsection{One space dimension: well-posedness in $\BUC^2$} We reconsider the case of one space dimension, i.e., for $u(t,x)\in \R^N$ the problem
\begin{equation}\label{e:1D-rev}
u_t = (a(u) u_x)_x + f(u,u_x), \qquad t>0, \qquad x\in \R.
\end{equation}
We present a setting in which non-localized perturbations of steady states can be treated. For $k\in \N_0$, denote by $\BUC^k = \BUC^k(\R,\R^N)$ the Banach space of bounded uniformly continuous functions, endowed with the usual $C^k$-norm.
It is shown in \cite{Lunardi1} that a scalar second order elliptic operator on $\BUC = \BUC^0$ behaves well and generates an analytic semigroup. This is the main ingredient to apply Theorem \ref{t:lun-abstract} as follows. The triangular structure of $a$ is assumed for simplicity.

\begin{theorem} \label{t:lun-1D} Let  $\overline{u}\in \BUC^2(\R, \R^{N})$ be a steady state of \eqref{e:1D-rev} and let $U_1,U_2\subseteq \R^N$ be open neighbourhoods of the closure of image of $\bu$ resp. $\bu_x$. Assume  $a:U_1\to \R^{N\times N}$ and $f:U_1\times U_2\to \R^N$ are $C^2$, such that
\begin{itemize}
\item for each $\zeta\in U_1$ the matrix $a(\zeta)$ is triangular, and the diagonal entries of $a$ are positive and bounded away from zero uniformly.
\end{itemize}
Then there is an open neighbourhood $\V$ of $\overline{u}$ in $\BUC^2$ such that \eqref{e:1D-rev} is locally well-posed in $\V$. One can take $\V = \BUC^2$ if $U_1= U_2 = \R^N$.
\end{theorem}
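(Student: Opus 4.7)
The plan is to apply Theorem~\ref{t:lun-abstract} with $X_0=\BUC$, $X_1=\BUC^2$, and
$\mathcal F(u) := (a(u)u_x)_x + f(u,u_x) = a(u)u_{xx} + a'(u)[u_x]\,u_x + f(u,u_x)$.
I would take $\V$ as a small open neighbourhood of $\overline{u}$ in $\BUC^2$, chosen so that for every $u\in\V$ one has $u(\R)\subset U_1$ and $u_x(\R)\subset U_2$; this is possible by the continuous embedding $\BUC^2\hookrightarrow \BUC^1$ together with the openness of $U_1,U_2$. In the case $U_1=U_2=\R^N$ no such restriction is needed and one can take $\V=\BUC^2$.

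Smoothness of $\mathcal F$ is routine: the superposition operators associated with the $C^2$ maps $a$ and $f$ act continuously on spaces of continuous functions, so $u\mapsto a(u),a'(u)$ are $C^1$ from $\V$ into $\BUC^2$ and $u\mapsto f(u,u_x)$ is $C^1$ from $\V$ into $\BUC^1$, all with locally Lipschitz derivatives. Combining with boundedness of $\partial_x,\partial_x^2:\BUC^2\to \BUC$ and pointwise multiplication on $\BUC$, one obtains $\mathcal F\in C^1(\V,\BUC)$ with locally Lipschitz derivative.

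The main step is the semigroup generation. For $w_0\in\V$ one has $\mathcal F'(w_0)h = a(w_0)h_{xx} + B(x)h_x + C(x)h$ with $B,C\in \BUC(\R,\R^{N\times N})$ depending on $w_0, w_{0x}, w_{0xx}$. By assumption $a(w_0)(x)$ is triangular with diagonal entries $a_{ii}(w_0(x))$ uniformly bounded below by a positive constant; after relabelling, assume $a(w_0)$ is upper triangular. Each diagonal scalar operator $L_i = a_{ii}(w_0)\partial_x^2 + B_{ii}\partial_x + C_{ii}$ has $\BUC$ coefficients on $\R$ and a uniformly elliptic leading term, so by the scalar theory in \cite[Chapter~3]{Lunardi1}, $L_i$ with domain $\BUC^2(\R)$ generates a strongly continuous analytic semigroup on $\BUC(\R)$, the graph norm being equivalent to the usual $\BUC^2$-norm (this uses the one-dimensional Landau--Kolmogorov inequality $\|u'\|_\infty\lesssim \|u\|_\infty^{1/2}\|u''\|_\infty^{1/2}$ to identify the abstract domain with $\BUC^2(\R)$). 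For the triangular system I would compute the resolvent $(\lambda-\mathcal F'(w_0))^{-1}$ by back-substitution starting from the $N$-th (purely scalar) equation and working upwards: once $h_j$ for $j>i$ are determined, the $i$-th equation reads $(\lambda-L_i)h_i = g_i - \sum_{j>i}\bigl(a_{ij}(w_0)h_{j,xx} + B_{ij}h_{j,x} + C_{ij}h_j\bigr)$, whose right-hand side lies in $\BUC$ with norm controlled by $\|g\|_{\BUC}$ thanks to the uniform scalar bound $\|L_j(\lambda-L_j)^{-1}\|_{\mathscr L(\BUC)}\le M$ on a sector. Iterating gives the sectorial estimate $\|(\lambda-\mathcal F'(w_0))^{-1}\|_{\mathscr L(\BUC)}\le M'/|\lambda|$, and an analogous back-substitution identifies the domain as $(\BUC^2)^N$ with graph norm equivalent to the $\BUC^2$-norm.

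All hypotheses of Theorem~\ref{t:lun-abstract} are then satisfied, yielding local well-posedness of \eqref{e:1D-rev} in $\V$. The main obstacle is the generator property for the triangular system on $X_0=\BUC$: unlike the Hilbert-space setting exploited in Theorem~\ref{t:KPW-abstract}, there is no off-the-shelf abstract result, and one must combine the scalar one-dimensional theory of \cite{Lunardi1} with the triangular back-substitution while preserving sectoriality uniformly in $\lambda$.
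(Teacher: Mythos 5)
Your proposal is correct and follows the same overall architecture as the paper's proof: apply Theorem~\ref{t:lun-abstract} with $X_0=\BUC$, $X_1=\BUC^2$, check that the superposition operator $\mathcal F$ is $C^1$ with locally Lipschitz derivative on a neighbourhood $\V$ of $\overline{u}$ whose elements have images uniformly inside $U_1$ (and derivatives inside $U_2$), and reduce the generator property to the scalar one-dimensional theory of \cite{Lunardi1} plus the triangular structure of $a$. The one place where you genuinely diverge is the passage from the scalar generators to the $N\times N$ system. The paper treats only the principal part $v\mapsto a(w_0)v_{xx}$, invokes the abstract matrix-generator result \cite[Corollary 3.3]{Nag89} for triangular operator matrices, and then absorbs all lower-order terms by a standard perturbation argument; you instead fold the diagonal lower-order terms into the scalar operators $L_i$ and establish sectoriality of the full system by explicit resolvent back-substitution, using the uniform bound on $\|L_j(\lambda-L_j)^{-1}\|_{\mathscr L(\BUC)}$ together with the graph-norm/$\BUC^2$-norm equivalence to control the off-diagonal terms $a_{ij}h_{j,xx}+B_{ij}h_{j,x}+C_{ij}h_j$. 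Your back-substitution is essentially a self-contained proof of the special case of Nagel's result that is needed here, so your remark that ``there is no off-the-shelf abstract result'' is not quite right --- that is precisely what \cite{Nag89} supplies --- but the hands-on route is sound and has the advantage of making the domain identification $(\BUC^2)^N$ and the uniformity of the sectorial estimate completely explicit, at the cost of some bookkeeping that the citation avoids.
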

\begin{proof} Choose an open set $V\subset \R^N$ that contains the image of $\overline{u}$ and satisfies $\overline{V}\subset U$. Define $\V$ as the set of all $w_0\in \BUC^2$ with image contained in $V$. Then $\mathcal F(u) = (a(u) u_x)_x +  f(u,u_x)$ defines a superposition operator $\mathcal F:\V\to \BUC$. It is straightforward to check that $\mathcal F\in C^1(\V, \BUC)$. At $w_0\in \V$ we have 
$$\mathcal F'(w_0)v = (a(w_0)v_x)_x + (a'(w_0)[(w_{0})_x,v])_x +c v_x + f'(w_0)v,\qquad v\in \BUC^2,$$
 and $\mathcal F':\V\to \mathscr L(\BUC^2, \BUC)$ is locally Lipschitz. For the generator property, let $w_0\in \V$ be given. By \cite[Corollary 3.1.9]{Lunardi1}, each of the scalar-valued operators $v \mapsto a_{ii}(w_0)v_{xx}$ with domain $\BUC^2$ generates an analytic $C_0$-semigroup on $\BUC$, where $a_{ii}$ are for $i = 1,...,N$ the diagonal entries of $a$. Using the matrix generator result \cite[Corollary 3.3]{Nag89} and the triangular structure of $a$, we conclude that the principle part $v\mapsto a(w_0)v_{xx}$ of $\mathcal F'(w_0)$ is a generator on $\BUC(\R,\R^N)$, with domain $\BUC^2(\R, \R^N)$. The remaining lower order terms preserve this property. The equivalence of the graph norm of $\mathcal F'(w_0)$ and the $C^2$-norm follows from the boundedness of the coefficients and the open mapping theorem. Therefore Theorem \ref{t:lun-abstract} applies to \eqref{e:1D-rev}.
\end{proof}
 
\subsection{More general problems and other frameworks}\label{s:outlook}
The above results also hold for smooth $x$-dependent nonlinearities, provided the principal term $a$ is positive definite uniformly in $x$. Also non-autonomous and nonlocal problems can be treated, see \cite{Amann2, Amann3, Lunardi1, Pruess}. Only the mapping properties of the superposition operators and the generator properties of the linearization are relevant. Both frameworks cover general quasilinear systems in any dimension if one works with $X_0= L^q$ for large $q$ as a base space, since then the superposition operators are well-defined by Sobolev embeddings. Theorem \ref{t:lun-abstract} also allows to work in spaces of H\"older continuous functions, $L^\infty$ or subspaces of $\BUC$ like $C_0$ or $C(\overline{\R})$, based on the analytic generator results  of \cite{Lunardi1} and \cite[Section VI.4]{EN}.

A framework with spatial weights might also be of interest, for instance, to force some decay of solutions \cite{Zumbrun} or to treat singular terms \cite{Mey}. Here in particular weights with exponential growth are straightforward to treat, as the generator results can be obtained from the unweighted case by a simple similarity transformation.

Besides the above approaches based on maximal $L^p$- and H\"older regularity there is a similar abstract approach based on continuous regularity \cite{Ang, CS}. Completely different frameworks for problems  in weaker settings on bounded domains with boundary conditions are presented in \cite{Amann, GR}. They should also be applicable to problems on $\R^n$. Finally, the poineering work of \cite{LSU} should be mentioned. For a comprehensive overview of possible settings for quasilinear parabolic problems we refer to \cite{Amann2}.


\section{Stability and spectra of travelling waves}\label{s:spec} While travelling waves also occur in higher space dimensions, we restrict here to $x\in\R$. 

Throughout, let $u_* (t,x) = \bu(x-ct)$ be a travelling wave solution of 
$$u_t = (a(u)u_x)_x + f(u,u_x), \qquad x\in \R,$$
with speed $c\in \R$ and profile $\bu \in \BC^\infty(\R,\R^N)$ solving the ordinary differential equation \eqref{e:steady}.
We assume that $a,f$ are $C^\infty$ and that $a$ is uniformly positively definite in a vicinity of the image of $\bu$. Suitable finite regularity of $\bu, a,f$ suffices for each of the following results and we assume infinite smoothness only for the sake of a simple exposition.  We further assume that $\bu$ is constant or periodic at infinity and that the asymptotic states are approached exponentially.  A travelling wave is called a \emph{pulse} or a \emph{front} if the asymptotic states are equal or different homogeneous equilibria, respectively. A \emph{wavetrain} is a periodic travelling wave, and we refer to travelling waves with at least one periodic asymptotic state as \emph{generalized} fronts or pulses.

\subsection{Stability in a perturbative setting} The evolution of perturbations $u$ of $u_*$ is governed by
\begin{equation}\label{e:1234}
u_t = (a(\overline{u}+u) u_x)_x + ( a(\overline{u}+u) \overline{u}_x)_x + c(\bu_x + u_x) + f(\overline{u}+u, \bu_x + u_x),
\end{equation}
where the co-moving frame $x-ct$ is again denoted by $x$. By translation invariance of the underlying equation, stability must be considered with respect to the family of translates
$$S= \{\bu(\cdot+\tau)- \bu:\tau\in \R\}.$$
The Theorems \ref{t:quasiRDSpert}, \ref{t:quasiRDS-Rn} and \ref{t:lun-1D} guarantee local well-posedness of \eqref{e:1234} for initial data from $\mathcal X= H^2$, $\X = B_{2,p}^{2-2/p}$ or $\X = \BUC^2$ sufficiently close to $S$ (note that in Theorem  \ref{t:quasiRDS-Rn} it is actually assumed that $f$ is independent of $u_x$). Even though $H^2 \subset B_{2,p}^{2-2/p}$ we distinguish between these cases, because of the different corresponding base spaces $H^1$ and $L^2$, and to highlight that a pure Sobolev space setting suffices for \eqref{e:1234}. For $\X = \BUC^2$, or in case of a pulse, one could equivalently consider \eqref{e:1234} with $\bu$ replaced by zero, in a neighbourhood of $\{\bu(\cdot+\tau):\tau\in \R\}$.

If $u_*$ is a pulse or a front, then $S$ is in each setting a family of equilibria of \eqref{e:1234}. 

\begin{definition} \label{def:stab} A pulse or front solution $u_*$  is called \emph{orbitally stable}, if for each initial value $u_0 \in \X$ sufficiently close to $S$ the corresponding solution of \eqref{e:1234} exists and stays as close as prescribed to $S$ for all positive times.  $u_*$ is called \emph{orbitally stable with asymptotic phase}, if it is orbitally stable and if for each $u_0\in \X$ sufficiently close to $S$ there is $\tau_\infty$ such the corresponding solution of \eqref{e:1234} converges to $\bu(\cdot+\tau_\infty)- \bu$ as $t\to \infty$. $u_*$ is \emph{orbitally unstable} if it is not orbitally stable.
\end{definition}

For a wavetrain, translates of the profile cannot be realized by localized perturbations. Thus only for $\X = \BUC^2$ orbital stability as above can be considered. For localized perturbations, i.e., $\mathcal X= H^2$ or $\X = B_{2,p}^{2-2/p}$, stability of a wavetrain is understood with respect to stability of the zero solution of \eqref{e:1234}.

\subsection{The spectrum of the linearization} The linearization $\L$ of the right-hand side of \eqref{e:1234} in $u = 0$ is 
\begin{align}\label{e:Linspecial}
\L \varphi=\alpha  \varphi_{xx} + \beta \varphi_x + \gamma\varphi,
\end{align}
with smooth coefficients $\alpha(x),\beta(x),\gamma(x)\in \R^{N\times N}$ given by
$$\alpha = a(\bu), \qquad \beta = a'(\bu)[\bu_x, \cdot] + a'(\bu)[\cdot,\bu_x]+c + \partial_2f(\bu,\bu_x),$$
$$ \gamma = a''(\bu)[\bu_x,\cdot,\bu_x] + a'(\bu)[\cdot,\bu_{xx}] + \partial_1 f(\overline{u},\bu_x).$$
Depending on the chosen well-posedness framework, the operator $\L$ is considered on $X_0=H^1$, $L^2$ or $\BUC$, with domain $H^3$, $H^2$ or $\BUC^2$, where we write $\L_{X_0}$ for a realization.

If translations of the profile can be realized by perturbations in $\X$, i.e., for pulses and fronts in any setting and for  wavetrains for $\X = \BUC^2$, then, by translation invariance, $\lambda = 0$ is an eigenvalue of $\L$ with eigenfunction $\bu_x$.

As for the approach surveyed in \cite{BjornSurvey}, we distinguish between the \emph{point spectrum}, i.e., $\lambda\in \spec\,\L_{X_0}$ such that $\L_{X_0}-\lambda$ is a Fredholm operator of index zero, and the complementary \emph{essential spectrum}. We will see that point and essential spectrum are independent of the chosen framework and that the familiar spectral theory for ordinary differential operators based on exponential dichotomies, as described in \cite{BjornSurvey}, applies to $\L$.

Usually the \emph{set of eigenvalues} is called point spectrum. Note that with the above definition eigenvalues can be contained in the essential spectrum. Eigenvalues are not independent of the setting: realized on $\BUC$, zero is an eigenvalue for $\phi\mapsto \phi' - \text{i}\phi$, but it is not an eigenvalue for its realization on $L^2$ and $H^1$. Of course this does not contradict Proposition \ref{prop:spex-indie} on kernel dimensions below since the operator is not Fredholm.

Since it is assumed that $a$ is positive definite in a neighbourhood of the image of $\bu$, the multiplication by $\alpha^{-1}$ is an isomorphism in each setting. Thus the invertibility and Fredholm properties of $\L-\lambda$ are the same as for
$$\tilde \L(\lambda)= \alpha^{-1} (\L - \lambda) = \partial_{xx} + \alpha^{-1}\beta \partial_x + \alpha^{-1}(\gamma-\lambda),$$
which has constant leading order coefficients.  As before we write $\tilde \L_{X_0}(\lambda)$ for a realization of $\tilde \L(\lambda)$.  The key to the spectral properties of $\tilde \L(\lambda)$ is the corresponding first order operator 
$$\tilde \calT(\lambda) = \partial_x - A(\cdot,\lambda), \qquad A(x,\lambda) = \left ( \begin{array}{cc} 0 &-1\\ \alpha^{-1}(x)(\gamma(x)-\lambda) & \alpha^{-1}(x) \beta(x) \end{array} \right),$$ 
which is obtained from rewriting $\tilde \L(\lambda) = 0$ into a first order ODE. Hence $A(x,\lambda)$ is a $(2N\times 2N)$-matrix. We write $\tilde \calT_{L^2}(\lambda)$ and $\tilde \calT_{BUC}(\lambda)$ for the realization of $\tilde \calT(\lambda)$ on $L^2(\R, \C^{2N})$ and $\BUC(\R, \C^{2N})$ with natural domains, respectively.

The following result is rather folklore, but does not seem to be explicitly stated in the literature. The equality of spectra for realizations on $\Lspace^p$, $1\leq p <\infty$ and the space $C_0$ of continuous functions vanishing at infinity follows from \cite[Corollary 4.6]{RS}. For the more general theory of dichotomies and spectral mapping results on these spaces we refer to the monograph \cite{CL99}.

\begin{proposition} \label{prop:spex-indie} The following assertions are true, where $\lambda\in \C$.
\begin{itemize}
 \item The spectrum, the point spectrum and the essential spectrum of $\L_{H^1}$, $\L_{L^2}$ and $\L_{\BUC}$ all coincide, respectively. 
 \item The operator $\L_{L^2}-\lambda$ is invertible if and only if $\tilde \calT_{L^2}(\lambda)$ is invertible. 
 \item The operator $\L_{L^2}-\lambda$ is Fredholm if and only if $\tilde \calT_{L^2}(\lambda)$ is Fredholm. In this case the Fredholm indices coincide, as well as the dimension of the kernels.
\end{itemize}
\end{proposition}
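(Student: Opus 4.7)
The plan is to reduce everything to spectral properties of the first-order system $\tilde\calT(\lambda)$. Two preliminary moves set this up. First, since $\alpha = a(\bu)$ is uniformly positive definite, multiplication by $\alpha^{-1}$ is a topological automorphism on each of $H^1$, $L^2$, and $\BUC$; consequently $\L_{X_0} - \lambda$ and the normalized operator $\tilde\L_{X_0}(\lambda) = \alpha^{-1}(\L_{X_0} - \lambda)$ have identical invertibility, Fredholm property, index, and kernel/cokernel dimensions. Second, I will introduce the doubling map $\Phi u := (u, u_x)^\top$ (adjusted in sign to match the definition of $A$), which is a bounded isomorphism from $H^{k+1}(\R, \C^N)$ onto the closed graph subspace $V_k := \{w \in H^k(\R, \C^{2N}) : w_2 = (w_1)_x\}$, and will verify by direct computation the intertwining identity $\tilde\calT(\lambda)\,\Phi u = (0, \tilde\L(\lambda) u)^\top$.

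This intertwining already yields $\Phi(\ker \tilde\L_{L^2}(\lambda)) = \ker \tilde\calT_{L^2}(\lambda)$ (and the analogous identity in each setting), giving equality of kernel dimensions. For the Fredholm and invertibility claims I will use the direct sum decomposition $H^1(\R, \C^{2N}) = V_1 \oplus (\{0\} \times H^1(\R,\C^N))$: the restriction of $\tilde\calT(\lambda)$ to $\{0\} \times H^1$ is, up to a bounded term, the closed-range injection $(0, w) \mapsto (w, w_x)$ into $L^2(\R, \C^{2N})$, while the restriction to $V_1$ is conjugate via $\Phi$ to $\Theta \circ \tilde\L(\lambda)$, where $\Theta \colon L^2(\R, \C^N) \to L^2(\R, \C^{2N})$, $g \mapsto (0, g)$. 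A block-triangular Fredholm accounting then transfers Fredholm property, index, and invertibility between $\tilde\L_{L^2}(\lambda)$ and $\tilde\calT_{L^2}(\lambda)$, which is exactly the ODE reduction at the heart of the spatial-dynamics framework surveyed in \cite{BjornSurvey}.

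For part 1, the doubling and normalization extend verbatim to $H^1$ and $\BUC$, so it suffices to know that the spectrum, point spectrum, and essential spectrum of $\tilde\calT(\lambda)$ do not depend on the realization. The $L^p$-case for $1 \le p < \infty$ and the $C_0$-case are covered by the cited \cite[Corollary 4.6]{RS}; the extension to $\BUC$ follows because the Fredholm theory of $\tilde\calT(\lambda)$ is governed by exponential dichotomies of the asymptotic ODE at $x = \pm\infty$, which depend only on the hyperbolic data of $A(\pm\infty,\lambda)$ and not on the ambient function space (see \cite{CL99}). Finally, the $H^1$-realization of $\tilde\L$ corresponds via $\Phi$ to an $H^1$-realization of $\tilde\calT(\lambda)$, which is conjugate to its $L^2$-realization by a standard regularity shift, so the $H^1$-spectrum coincides with the $L^2$-spectrum.

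The main obstacle will be the Fredholm-index bookkeeping in the second step: while the kernel identification through $\Phi$ is immediate, the cokernel comparison requires careful handling of the $V_1$-complement and of the image of $\Theta$. I intend to phrase this via exponential dichotomies at $\pm\infty$ rather than by an ad-hoc cokernel computation, since the dichotomy language makes the setting-independence of part 1 essentially automatic and matches the formalism used elsewhere in the paper.
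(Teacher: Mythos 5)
Your overall strategy---normalize by $\alpha^{-1}$, pass to the first-order system $\tilde\calT(\lambda)$, and use the function-space independence of the exponential-dichotomy characterization---is the same as the paper's. The paper outsources your block-triangular second-to-first-order bookkeeping to \cite[Theorem~A.1]{SanSchIndices} and gets the $L^2$/$\BUC$ comparison from \cite[Theorem 1.2]{BG} together with \cite[Lemma 4.2]{Pa84} and \cite{Pa88}, which also supply the identical kernel-dimension formula in both settings; your announced fallback of ``phrasing the cokernel comparison via dichotomies'' is exactly this, so that part of the proposal is sound, if partly a restatement of the citations.

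The genuine gap is the $H^1$ step. You assert that the $H^1$-realization of $\tilde\calT(\lambda)$ is ``conjugate to its $L^2$-realization by a standard regularity shift,'' but no such standard conjugation exists for a variable-coefficient operator: the natural candidate $(1-\partial_x^2)^{1/2}$ does not commute with multiplication by $A(\cdot,\lambda)$, and the resulting commutator is a bounded, order-zero perturbation that can in principle alter spectrum, kernels and Fredholm indices. This is precisely why the paper devotes Lemma~\ref{l:iso} to constructing a commuting isomorphism $T=(\omega-\L_{L^2})^{1/2}$, the square root of a shift of the operator itself, which commutes with $\L_{L^2}$ by construction; the nontrivial content there is the identification $D\bigl((\omega-\L_{L^2})^{3/2}\bigr)=H^3$, so that $T$ maps the $H^1$-domain $H^3$ onto the $L^2$-domain $H^2$ and yields $\L_{H^1}=T^{-1}\L_{L^2}T$. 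Your argument could be repaired either by importing that lemma, or by a regularity-bootstrap for the first-order system (solutions of $w'=A(\cdot,\lambda)w+g$ with $g\in H^1$ and $w\in H^1$ automatically lie in $H^2$, and the dichotomy solution formula preserves $H^1$), but as written the $H^1$ claim is unsupported.
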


\begin{proof}  Lemma \ref{l:iso} provides an isomorphism $T$ from $H^1$ to $L^2$ and from $H^3$ to $H^2$ such that $\L_{H^1} = T^{-1} \L_{L^2} T$. Thus $\L_{H^1}-\lambda$ and $\L_{L^2}-\lambda$ have for each $\lambda\in \C$ the same invertibility and Fredholm properties. It remains to compare $\L_{L^2}-\lambda$ and $\L_{\BUC}-\lambda$. Since $\alpha$ is boundedly invertible, these operators have the same invertibility and Fredholm properties as $\tilde \L_{L^2}(\lambda)$ and $\tilde\L_{\BUC}(\lambda)$, respectively. It follows from \cite[Theorem~A.1]{SanSchIndices} that their Fredholm properties are the same as those of $\tilde \calT_{L^2}(\lambda)$ and $\tilde \calT_{\BUC}(\lambda)$, respectively. It is further clear that the dimensions of the kernels coincide in both settings. Now in \cite[Theorem 1.2]{BG} it is shown that the Fredholm properties of $\tilde \calT_{L^2}(\lambda)$ are characterized by exponential dichotomies of the ODE $v' = A(\cdot,\lambda)v$ on both half-lines, and that in this case the 
dimension of the kernel of $\tilde \calT_{L^2}(\lambda)$ only depends on the image of the dichotomies. This characterization is also true for $\tilde \calT_{\BUC}(\lambda)$ with the same formula for the dimension of the kernel, see \cite[Lemma 4.2]{Pa84} and \cite{Pa88}. Hence the invertibility and Fredholm properties of $\tilde \calT_{L^2}(\lambda)$ and $\tilde \calT_{\BUC}(\lambda)$ coincide, and if the operators are Fredholm, then the dimensions of the kernels coincide. This carries over to $\L_{L^2}-\lambda$ and $\L_{\BUC}-\lambda$ by the above considerations and shows the assertions. \end{proof}

We finally remark that also for the realization of $\tilde \calT(\lambda)$ on $L^q$ with any $1<q<\infty$ the Fredholm properties are characterized by exponential dichotomies (see \cite[p. 94]{BG}). Together with the arguments for \cite[Theorem~A.1]{SanSchIndices}, an appropriate generalization of Lemma \ref{l:iso} and interpolation. This shows that the spectrum of $\L$ is independent of its realization on any of the spaces $H^{s,q}$ and $B^s_{q,r}$, where $s\geq 0$ and $1\leq r\leq \infty$.

\subsection{Computation of the spectrum}\label{s:specwtSob} The invertibility and Fredholm properties of $\tilde \calT(\lambda)$, and thus the characterization of point and essential spectrum of $\L$, are described in terms of exponential dichotomies in \cite[Section 3.4]{BjornSurvey}. This is independent of the variable leading order coefficients of $\L$ due to its quasilinear origin, and thus the same as for semilinear reaction-diffusion systems. We briefly describe the main points for each type of wave.

For a \emph{homogeneous steady state} the point spectrum of the constant coefficient operator $\L$ is empty. Since the Fourier transform is an isomorphism on $L^2$, the (essential) spectrum can be determined by transforming $\L$ to 
$$\widehat \L(\kappa) = - \alpha\kappa^2 + \text{i}\beta\kappa + \gamma \in \C^{N\times N}, \qquad \kappa\in \R.$$ Now we have $\lambda\in \spec\,\L$ if and only if 
$$d(\lambda,\kappa) := \det (\widehat \L(\kappa) -\lambda) = \det (A(\lambda)-\text{i}\kappa) = 0$$
for some $\kappa$, which is called the dispersion relation for $\L$. The latter also means that $A(\lambda)$ is a non-hyperbolic matrix. Thus here it is straightforward to determine the spectrum, at least for $N$ not too large. 

For \emph{pulses and fronts}, replacing the variable coefficients of $\L$ by their values at $\pm\infty$ leads to constant coefficient operators $\L^\pm$ whose spectrum is determined as just described. For pulses the essential spectrum of $\L$ already coincides with $\spec\,\L^\pm$. For fronts, $\spec\,\L^\pm$ equals the boundary of the essential spectrum of $\L$, which is usually already sufficient to know for stability issues. This is related to the fact that the replacement by the values at infinity is a relatively compact perturbation of $\L$, which leaves Fredholm properties invariant (see \cite[Theorem IV.5.26]{Kato2}). The point spectrum of a pulse or a front is determined by detecting intersections of the stable and unstable subspaces of $v' = A(\cdot,\lambda)v$. Here the \emph{Evans function}  \cite{Evans,AGJ} is a powerful tool and we refer to the survey \cite[Section 4]{BjornSurvey} and the references therein.

For a \emph{wavetrain}, i.e., when $\bu$ is periodic with wavelength (period) $L > 0$, the coefficients of $\L$ are periodic. The point spectrum is empty. Instead of the Fourier transform, here the Floquet-Bloch transform applies and yields (see \cite[Theorem A.4]{Mielke97}, also for higher space dimensions)

\begin{align}\label{e:specBloch}
\spec\, \L = \overline{\cup_{\kappa\in[0,2\pi/L)}\spec\, \B(\kappa)}.
\end{align}
For $\kappa \in[0,2\pi/L)$ the operator $\B(\kappa):\Hspace^2_\per(0,L)\subset \Lspace_{2,\per}(0,L) \to  \Lspace_{2,\per}(0,L)$ with periodic boundary conditions is given by
\[
\B(\kappa)U=\rme^{-\rmi\kappa x}\L[\rme^{\rmi\kappa x}U] = \widehat\L(\rmi\kappa + \partial_x)U,
\]
where $\hat\L(\cdot)$ is the formal operator symbol of $\L$. Since $\spec\,\B(\kappa)$ only consists of eigenvalues, its spectrum is fully determined by the solvability of the family of boundary value problems
\[
\hat\L(\rmi\kappa+\partial_x)U=\lambda U\,,\qquad U(0) = U(L).
\]
In fact, also multiplicity of eigenvalues is determined via Jordan chains as in \cite{AGJ,BjornSurvey}. Notably, the spectrum again comes in curves; now an infinite countable union since the eigenvalue problem for each $\kappa$ still concerns an unbounded operator (rather than a matrix in case of a homogeneous steady state).  

Via $V=\rme^{\rmi\kappa x}U$, the boundary value problem formulation is equivalent to
\[
\hat\L(\partial_x)V=\lambda V\,,\qquad V(0) = \rme^{\rmi\kappa L}V(L).
\]
By Floquet theory, this precisely means that the period map $\Pi(\lambda)$ of the evolution operator for the ODE $\hat\L(\partial_x)U=\lambda U$ possesses an eigenvalue (a Floquet multiplier) $\rme^{\rmi\kappa L}$. Hence, also here a (linear) dispersion relation can be defined by
\[
d(\lambda, \kappa) = \det\left(\Pi(\lambda) - \rme^{\rmi\kappa L}\right)=0,
\]
which precisely characterizes the spectrum. An important difference to the case of homogeneous steady states is that $\lambda=0$ always lies in the essential spectrum: $x$-independent coefficients of \eqref{e:1234} yield a trivial zero Floquet exponent, which implies that $d(0,0)=0$. Indeed, $\B(0)\bu_x=0$ in this translation symmetric case. 

Finally, in case of a \emph{generalized wave train}, the boundary of the essential spectrum of $\L$ is as above obtained by replacing the coefficients of $\L$ with its periodic limits at $\pm\infty$, and considering the dispersion relation. The  point spectrum is also given by an Evans-function, see \cite[Section 4]{SSdefect} (here also the more general case of time periodic solutions, so-called defects, is treated).

\section{Nonlinear stability and instability} \label{s:nl-stab}  For the nonlinearities $a,f$ and a travelling wave solution $u_*(t,x) = \bu(x-ct)$ of \eqref{e:quasi} we make the same assumptions as in the previous section. We consider \eqref{e:1234} 
\begin{equation*}
u_t = (a(\overline{u}+u) u_x)_x  + ( a(\overline{u}+u) \overline{u}_x)_x + c(\bu_x + u_x) + f(\overline{u}+u, \bu_x + u_x)
\end{equation*}
in any of the well-posedness settings in a neighbourhood of $S  = \{\bu(\cdot+\tau)-\bu:\tau\in \R\}$.

\subsection{Stability of pulses and fronts} Recall the precise notion of orbital stability from Definition \ref{def:stab}. An application of \cite{PSZ09, PSZ09-2} gives the following conditional result. For more information on semisimple eigenvalues in Banach spaces we refer to \cite[Appendix A.2]{Lunardi1}.

\begin{proposition} \label{prop:stability} Let $\bu$ have constant  asymptotic states. Assume $\lambda = 0$ is a semisimple 
eigenvalue of $\mathcal L$ with eigenfunction $\bu'$, i.e., $\emph{\textrm{ker}}\, \L = \mathrm{span}\{\bu'\}$ and $X_0=\emph{\textrm{ker}}\, \L\oplus \mathrm{im}\,\L$. Assume further that the remaining part of $\spec\, \mathcal L$ is strictly contained in $\{\emph{\text{Re}}\,\lambda < 0\}$. Then the travelling wave $u_*$ is orbitally stable with asymptotic phase, and limit translates $u(\cdot+\tau_\infty)$ are approached exponentially.
\end{proposition}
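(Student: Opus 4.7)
The plan is to realize the conclusion as a direct application of the Pr\"uss--Simonett--Zacher principle of linearized orbital stability for a one-dimensional manifold of equilibria \cite{PSZ09, PSZ09-2}, and simply to verify that its hypotheses are met in each of the well-posedness frameworks of \S\ref{s:wellHil}.

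First I would identify $S = \{\bu(\cdot + \tau) - \bu : \tau \in \R\}$ as a smooth one-dimensional embedded submanifold of the phase space $\X$, consisting entirely of equilibria of \eqref{e:1234}. Because $\bu$ connects constant asymptotic states and is approached exponentially, every derivative $\partial_x^k \bu$ lies in $H^k \cap \BUC^k$; hence the translation map $\tau \mapsto \bu(\cdot + \tau) - \bu$ is $C^\infty$ into $\X$ (be it $H^2$, $B_{2,p}^{2-2/p}$, or $\BUC^2$) with derivative $\bu'(\cdot + \tau)$. In particular, the tangent space at $u=0$ is $T_0 S = \mathrm{span}\{\bu'\}$, and $S$ is smooth at every point by translation invariance of the construction.

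Next I would match the spectral hypotheses to the normal hyperbolicity condition required by \cite{PSZ09-2}. The equality $\ker \L = \mathrm{span}\{\bu'\} = T_0 S$ together with the semisimplicity decomposition $X_0 = \ker \L \oplus \mathrm{im}\,\L$ yields a bounded spectral projection onto $T_0 S$ along a complement invariant under $\L$; the strict confinement of the remainder of $\spec\,\L$ to $\{\mathrm{Re}\,\lambda < 0\}$ furnishes the required spectral gap. Translation invariance propagates this along $S$: the linearization at $\bu(\cdot+\tau)$ is conjugate to $\L$ by a shift, hence shares its spectrum, with tangent kernel $\bu'(\cdot+\tau)$. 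Thus $S$ is a normally hyperbolic manifold of equilibria in the sense of \cite{PSZ09, PSZ09-2}.

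With these ingredients in place, the manifold version of the principle of linearized stability in \cite{PSZ09-2} applies directly and delivers the claimed orbital stability of $u_*$ with asymptotic phase and exponential attraction to a translate $\bu(\cdot + \tau_\infty)$. The main obstacle here is technical rather than conceptual: one must confirm that each of our concrete frameworks (maximal $L^p$-regularity in $H^2$ or in $B_{2,p}^{2-2/p}$, and maximal H\"older regularity in $\BUC^2$) fits the abstract setup used in \cite{PSZ09, PSZ09-2}. This is precisely what those papers are designed to accommodate, since both maximal regularity scales they allow cover all three of our settings, so nothing beyond bookkeeping is required.
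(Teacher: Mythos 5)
Your proposal is correct and follows essentially the same route as the paper: both reduce the statement to the Pr\"uss--Simonett--Zacher principle by checking that $S$ is a one-dimensional $C^1$-manifold of equilibria whose tangent space at $\tau=0$ is $\mathrm{span}\{\bu'\}=\ker\L$, that zero is semisimple, and that the rest of the spectrum lies in $\{\mathrm{Re}\,\lambda<0\}$, and then confirming that each well-posedness framework matches the abstract setup of \cite{PSZ09,PSZ09-2}. The only cosmetic difference is terminology: the condition you verify is what those references call \emph{normal stability} rather than normal hyperbolicity, but the content is identical.
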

\begin{proof} By translation invariance it suffices to consider $S$ in a neighbourhood of $\tau = 0$. The framework of Theorem \ref{t:KPW-abstract} is the one of \cite[Theorem 2.1]{PSZ09}, provided that, in addition, $A$ and $F$ belong to $C^1$, which is guaranteed by the assumption on $a$ and $f$. The setting of Theorem  \ref{t:lun-abstract} is the one of \cite[Example 3]{PSZ09-2}. To apply  \cite[Theorem 2.1]{PSZ09} and \cite[Theorem 3.1]{PSZ09-2} it remains to verify that zero is normally stable, in the sense of \cite{PSZ09, PSZ09-2}. We have that $S$ is a one-dimensional $C^1$-manifold, with tangent space at $\tau = 0$ spanned by $\bu'$. By assumption, the tangent space coincides with the kernel of $\mathcal L$ and zero is a semisimple eigenvalue. Hence normal stability follows.  \end{proof}

For a quasilinear variant of the Huxley equation, the above conditions have been verified in \cite[Section 5]{PSZ09} by elementary arguments.

\subsection{Instability of generalized pulses and fronts under localized perturbations} For localized perturbations, i.e., for $\X = H^2$ or $B_{2,p}^{2-2/p}$, a  generalized pulse or front $u_*$ is nonlinearly stable or unstable if the zero solution of \eqref{e:1234} is stable or unstable, as a single equilibrium in the sense of Lyapunov. Nonlinear stability is a delicate issue (see the discussion in the introduction). In case of an unstable spectral value we have the following.

\begin{proposition} \label{prop:instab-wavetrain} If $\bu$ has a periodic asymptotic state and $\spec\,\mathcal L\cap \{\emph{\text{Re}}\,\lambda > 0\} \neq \emptyset$, then the generalized  front or pulse  $u_*$ is nonlinearly unstable with respect to localized perturbations from $\X = H^2$ or $\X = B_{2,p}^{2-2/p}$.
\end{proposition}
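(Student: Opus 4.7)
The strategy is to reduce the claim to Lyapunov instability of the zero equilibrium and then apply Lemma~\ref{l:Henry-lem} to the trivial, zero-dimensional manifold $\{0\}$.

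First I verify that in $\X \in \{H^2, B_{2,p}^{2-2/p}\}$ the origin is an isolated equilibrium of \eqref{e:1234}. Since $\bu$ possesses a non-constant periodic asymptotic state $\bu_\per$, for any small $\tau \neq 0$ the translate difference $\bu(\cdot+\tau)-\bu$ converges, along the periodic end and up to exponentially small corrections, to $\bu_\per(\cdot+\tau)-\bu_\per$. This limit is a non-trivial periodic function, so $\bu(\cdot+\tau)-\bu$ is bounded away from zero on an unbounded set and therefore lies in no $L^q$ space, hence in particular not in $\X$. Consequently no nontrivial translate of $\bu$ provides a localized equilibrium of \eqref{e:1234}, so on $\X$ nonlinear instability of $u_*$ is equivalent to Lyapunov instability of the zero solution of \eqref{e:1234}.

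By Proposition~\ref{prop:spex-indie} the spectrum of $\mathcal L$ is independent of the chosen base space, and the hypothesis furnishes $\lambda_0 \in \spec\,\mathcal L$ with $\mathrm{Re}\,\lambda_0 > 0$. Since $\mathcal L$ generates an analytic semigroup, $\spec\,\mathcal L$ lies in a left-opening sector, so $\sigma_+ := \spec\,\mathcal L \cap \{\mathrm{Re}\,\lambda \geq \mathrm{Re}\,\lambda_0/2\}$ is compact and strictly separated from the remainder of the spectrum. The Riesz projection onto $\sigma_+$ produces a nontrivial invariant subspace on which the linear flow expands exponentially at rate at least $\mathrm{Re}\,\lambda_0/2$. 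This is the spectral input required by the Henry-type argument, and applying Lemma~\ref{l:Henry-lem} to the trivial manifold $\{0\}$ yields initial data in $\X$ arbitrarily close to zero whose solutions of \eqref{e:1234} escape a fixed neighbourhood of the origin in finite time --- precisely the asserted nonlinear instability of $u_*$ under localized perturbations.

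The main obstacle I anticipate is the administrative step of checking that Lemma~\ref{l:Henry-lem}, which is phrased for a smooth manifold of equilibria with a genuinely nontrivial tangent space, specialises cleanly to the zero-dimensional case. The tangential/normal decomposition driving the abstract argument collapses harmlessly when the tangent space is $\{0\}$, and the exponential expansion on the unstable Riesz subspace continues to yield the standard contradiction with Lyapunov stability within the maximal-regularity framework of \S\ref{s:wellHil}.
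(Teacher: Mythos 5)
Your overall architecture (reduce to Lyapunov instability of the zero solution of \eqref{e:1234}, extract linear instability from the unstable spectrum, conclude by a Henry-type theorem, with Lemma~\ref{l:Henry-lem} for $\mathcal E=\{0\}$ collapsing to \cite[Theorem 5.1.5]{Henry}) matches the paper's. The gap is in the spectral step. You assert that $\sigma_+=\spec\,\mathcal L\cap\{\mathrm{Re}\,\lambda\geq\mathrm{Re}\,\lambda_0/2\}$ is ``strictly separated from the remainder of the spectrum'' and build a Riesz projection onto an exponentially expanding invariant subspace. For a generalized front or pulse the unstable spectrum is typically \emph{essential} spectrum consisting of curves attached to the imaginary axis (see \S\ref{s:specwtSob} and Figures~\ref{f:turing}, \ref{f:turingWT}); such a curve crosses every vertical line $\{\mathrm{Re}\,\lambda=c\}$ with $c>0$ small, so the two pieces of $\spec\,\mathcal L$ obtained by cutting at $\mathrm{Re}\,\lambda_0/2$ are not separated, no admissible contour exists, and the Riesz projection is undefined. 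This is precisely the situation the paper insists on covering (``without assuming a spectral gap or an unstable eigenvalue''), so the step is not merely unjustified but fails in the cases of interest. The repair is that no invariant expanding subspace is needed: the Henry argument only requires the spectral \emph{radius} of $M=\Phi_1'(0)=e^{\mathcal L}$ on $\mathscr L(\X)$ to exceed one, which follows from the spectral inclusion $e^{\spec\,\mathcal L}\subseteq\spec\,(e^{\mathcal L})$ valid for analytic semigroups (\cite[Corollary 2.3.7]{Lunardi1}); the instability proof then proceeds via approximate eigenvectors rather than genuine spectral subspaces.

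Two further points you leave implicit but which must be supplied. First, the spectral radius statement is obtained on the base space $X_0$ (where $\spec\,\mathcal L$ is computed), while Henry's theorem needs it on the phase space $\X=(X_0,X_1)_{1-1/p,p}$ where the semiflow lives; the paper transports it by conjugating with $\mathcal L-\omega$ to pass to $X_1$ and then interpolating. Second, the hypothesis \eqref{e:T-sigma} requires the time-one map to be $C^2$ near zero with derivative $e^{\mathcal L}$, which is the content of Proposition~\ref{prop:time-T} combined with Lemmas~\ref{lem:superpos-H1} and \ref{lem:superpos-n}. Your opening paragraph (ruling out nearby translate equilibria in $\X$) is correct but not needed, since the paper \emph{defines} instability with respect to localized perturbations as Lyapunov instability of the zero solution of \eqref{e:1234}.
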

\begin{proof} The Lemmas \ref{lem:superpos-H1} and \ref{lem:superpos-n} together with Proposition \ref{prop:time-T} imply that the time-one solution map $\Phi_1$ for \eqref{e:1234} obtained in Theorems \ref{t:quasiRDSpert} and \ref{t:quasiRDS-Rn} from Theorem \ref{t:KPW-abstract}  is $C^2$ around zero, with $\Phi_1'(0) = e^{\mathcal L}\in \mathscr L(\X)$. Considered on $\mathscr L(X_0)$, this operator has spectral radius larger than one by  \cite[Corollary 2.3.7]{Lunardi1}. Using $\mathcal L-\omega$ with sufficiently large $\omega > 0$ as a conjugate, this property carries over to $e^{\mathcal L}$ considered on $\mathscr L(X_1)$. Now it follows from interpolation that  the realization of $e^{\mathcal L}$ on $\mathscr L(\X)$ has spectral radius greater than one. Thus the zero solution of \eqref{e:1234} is unstable by \cite[Theorem 5.1.5]{Henry}.
\end{proof}

\subsection{Orbital instability} Without assuming a spectral gap or the existence of an unstable eigenvalue we show that an unstable spectrum implies orbital instability.

\begin{theorem} \label{t:orb-instab} The following assertions are true.
\begin{itemize}
\item Let $\bu$ have constant asymptotic states. Assume $\spec \,\mathcal L \cap \{\emph{\text{Re}}\, \lambda >0\} \neq \emptyset$. Then $u_*$ is orbitally unstable with respect to localized and non-localized perturbations from $\X = H^2$, $B_{2,p}^{2-2/p}$ or $X = \BUC^2$.

\item  Let $\bu$ have a periodic asymptotic state. Assume $\spec \,\mathcal L \cap \{\emph{\text{Re}}\, \lambda >0\} \neq \emptyset$. Then $u_*$ is orbitally unstable with respect to non-localized perturbations from $\X = \BUC^2$.
\end{itemize}
\end{theorem}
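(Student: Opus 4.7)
The plan is to reduce both assertions to Lemma~\ref{l:Henry-lem}, the abstract orbital instability result for manifolds of equilibria, applied to the semiflow generated by \eqref{e:1234} around $S$. First I would verify that $S$ is a smooth one-dimensional manifold of equilibria in the phase space $\X$, with tangent vector $\bu_x$ at $\tau=0$. For constant asymptotic states, the exponential convergence of $\bu$ at $\pm\infty$ yields that $\bu(\cdot+\tau)-\bu$ is exponentially localized, hence lies in $H^2$, in $B_{2,p}^{2-2/p}$ and in $\BUC^2$; smoothness of $\tau\mapsto \bu(\cdot+\tau)-\bu$ into these spaces follows from $\bu\in\BC^\infty$ with uniformly bounded derivatives. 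In the second assertion at least one asymptotic state is periodic, so $\bu(\cdot+\tau)-\bu$ is in general not localized and only the $\BUC^2$-framework is available; the translation map is still $C^\infty$ into $\BUC^2$ by the same reasoning.

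Next I would record that the local semiflow $\Phi_t$ is of class $C^2$ in its initial condition near each point of $S$, with derivative at the equilibrium $\bu(\cdot+\tau)-\bu$ given by $e^{t\L_\tau}$, where $\L_\tau$ is the linearization of \eqref{e:1234} at that point. In the maximal $L^p$-regularity frameworks (Theorems~\ref{t:quasiRDSpert} and~\ref{t:quasiRDS-Rn}) this is provided by Proposition~\ref{prop:time-T}; in the H\"older setting of Theorem~\ref{t:lun-1D} the analogous smoothness follows from \cite[Chapter~8]{Lunardi1}. By translation invariance of the underlying equation, $\L_\tau$ is conjugate to $\L$ via the shift by $\tau$, so $\spec\,\L_\tau=\spec\,\L$ for every $\tau\in\R$, and the assumed unstable spectral value is available uniformly along $S$.

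For the spectral input to Lemma~\ref{l:Henry-lem} I would argue as in the proof of Proposition~\ref{prop:instab-wavetrain}: $e^\L$ on the base space $X_0$ has spectral radius exceeding one by \cite[Corollary~2.3.7]{Lunardi1}; conjugating with $\L-\omega$ for large $\omega>0$ propagates the property to $X_1$, and real interpolation then yields the same lower bound for the spectral radius of $e^\L$ on $\mathscr L(\X)$. Feeding this fact together with the $C^2$ smoothness of $\Phi_1$ and the manifold structure of $S$ into Lemma~\ref{l:Henry-lem}, one obtains, for every neighborhood $U$ of $S$ in $\X$, initial values arbitrarily close to $S$ whose forward orbits exit $U$, which is precisely orbital instability in the sense of Definition~\ref{def:stab}.

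The main obstacle is the abstract lemma itself, that is, lifting Henry's instability argument \cite[Theorem~5.1.5]{Henry} from a single equilibrium to a one-parameter manifold. The delicate point is that the translation direction $\R\bu_x$ lies in the kernel of each $\L_\tau$, so motion along $S$ must not be confused with genuine instability. I would handle this by decomposing $\X$ locally near a chosen equilibrium on $S$ into the tangent line to $S$ and a transversal complement, projecting the nonlinear evolution onto the complement, and observing that the projected linearization retains the unstable spectral value since only the zero mode is absorbed by the tangent direction. A Henry-type $C^2$ bootstrap along the unstable spectral projection in the transversal complement then produces the required escaping orbits, at which point orbital instability for both assertions falls out of the same abstract argument.
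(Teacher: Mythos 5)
Your reduction of the theorem to the abstract manifold-instability lemma is exactly the paper's route: the paper proves Theorem \ref{t:orb-instab} in one line by invoking Theorem \ref{t:instab-general} (itself an application of Lemma \ref{l:Henry-lem}), after noting that $\bu'\in X_1$ and $\L\bu'=0$, and it obtains the $C^2$ time-one map and the spectral radius statement for $e^{\L}$ on $\mathscr L(\X)$ exactly as you do, via Proposition \ref{prop:time-T} (resp.\ \cite[Proposition 6.2]{Mey} in the H\"older setting), \cite[Corollary 2.3.7]{Lunardi1} and interpolation. Up to that point your proposal is correct and essentially identical to the paper.

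The gap is in your sketch of the key lemma itself. You propose to split $\X$ into the tangent line $\R\bu_x$ and a transversal complement, project the evolution onto the complement, and run ``a Henry-type $C^2$ bootstrap along the unstable spectral projection.'' But the whole point of this theorem, stated explicitly in the paper, is that no spectral gap and no unstable \emph{eigenvalue} are assumed: the unstable spectrum may be purely essential (this is exactly the situation for wavetrains and generalized fronts, where $0$ itself sits in the essential spectrum), so in general there is neither an unstable spectral projection nor a spectral projection onto $\ker\L$ complementary to the tangent direction. A projected/decomposed argument of the kind you describe therefore cannot be carried out as stated, and the projected nonlinear dynamics would in any case not be closed. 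The paper's Lemma \ref{l:Henry-lem} avoids this entirely by generalizing Henry's \emph{approximate eigenfunction} iteration \cite[Theorem 5.1.5]{Henry}: one picks an approximate eigenvalue $\lambda=re^{\rmi\theta}$, $r>1$, of $M=e^{\L}$, iterates a small multiple of the approximate eigenvector, and controls the errors exactly as in Henry. The manifold enters only in the final distance estimate, where the hypothesis $\psi'(0)\zeta\in\ker(M-\mathrm{id})$ forces the approximate unstable eigenvector to be linearly independent of the tangent space (otherwise $Mu\approx u$ would contradict $|\lambda|>1$), so the escaping iterate cannot be absorbed by sliding along the translates. If you replace your projection-based sketch of the lemma by this approximate-eigenvector argument, your proof goes through.
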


This result is a direct consequence of the general orbital instability result Theorem \ref{t:instab-general} below for manifolds of equilibria: $\bu'\in X_1$ in the settings under consideration and $\mathcal L \bu' = 0$ by the exponential convergence of $\bu'$ at infinity and translation invariance of the equation.

The following lemma and its proof are generalizations of  \cite[Theorem 5.1.5]{Henry}. Similar to that result, the proof establishes that perturbations of suitable approximate unstable eigenfunctions deviate from the manifold of equilibria.

\begin{lemma} \label{l:Henry-lem} Let $X$ be a real Banach space, let $\V\subseteq X$ be an open neighbourhood of zero and let $\mathcal E \subset \V$ be an $m$-dimensional $C^2$-manifold containing zero. Let $\mathcal E$ be parametrized by an injective map $\psi:U\subset \R^m \to \mathcal E$ with $\psi(0) = 0$,  where $\psi'(0)$ has full rank $m$. Assume $T:\V\to X$ is continuous, that $T(u) = 0$ for $u\in\mathcal  E$ and that there is $M\in \mathscr L(X)$ with spectral radius greater than one such that, for some $\sigma > 1$, 
\begin{equation}\label{e:T-sigma}
\|T(u) - Mu\| = \mathcal O(\|u\|^\sigma)\qquad \text{ as } u\to 0.
\end{equation}
Suppose further that $\partial_1\psi(0), ..., \partial_m\psi(0)\in \emph{\text{ker}} (M -\emph{\text{id}})$.
Then $u_* = 0$ is orbitally unstable with respect to $\mathcal E$ under iterations of $T$. More precisely, there is $\varepsilon_0 > 0$ such that for each $\delta > 0$ there are $u_\delta \in  \V$ with $\|u_\delta\|\leq \delta$ and $N\in \N$ such that $T^n(u_\delta)\in \V$ for $n= 1,..., N$ and $\emph{\text{dist}}(T^N(u_\delta), \mathcal E)\geq \varepsilon_0$. \end{lemma}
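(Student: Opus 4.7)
The plan is to generalize Henry's proof of \cite[Theorem 5.1.5]{Henry} by substituting for a single unstable vector an \emph{approximate eigenfunction} of $M$ directed transversally to $\mathcal E$. First, decompose $X = W\oplus Z$, where $W := \psi'(0)(\R^m)$ is the $m$-dimensional tangent space to $\mathcal E$ at $0$, with continuous projections $P_W, P_Z$; since $W\subset\ker(M-\Id)$, the operator $M$ is block upper-triangular, $M = \bigl(\begin{smallmatrix}\Id & A\\ 0 & B\end{smallmatrix}\bigr)$, so $\sigma(M) = \{1\}\cup\sigma(B)$ and $r(B) = r(M) > 1$. The implicit function theorem applied to $\psi$ represents $\mathcal E$ locally as a $C^2$ graph $\{w + h(w):w\in V_W\}$ with $h(0) = h'(0) = 0$, so $\|h(w)\|\leq C_h\|w\|^2$ for small $w$.

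Pick $\lambda_0\in\sigma(B)$ with $|\lambda_0| = r(B) > 1$; since $\lambda_0$ lies on the boundary of $\sigma(B)$, it is an approximate eigenvalue, giving a sequence $(e_n)\subset Z$ with $\|e_n\| = 1$ and $\varepsilon_n := \|(B-\lambda_0)e_n\|\to 0$. The telescoping identity $B^ke_n - \lambda_0^k e_n = \sum_{j<k}\lambda_0^j B^{k-1-j}(B-\lambda_0)e_n$ combined with $\|B^j\|\leq C_{\tilde\rho}\tilde\rho^j$ for any fixed $\tilde\rho > |\lambda_0|$ yields $\|B^ke_n - \lambda_0^k e_n\|\leq C(\tilde\rho)\varepsilon_n\tilde\rho^k$. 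Choose $\tilde\rho$ so close to $|\lambda_0|$ that $|\lambda_0|^\sigma > \tilde\rho$, which is possible because $|\lambda_0|>1$ and $\sigma>1$.

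Fix $\varepsilon_0 > 0$ small (its smallness to be determined). For each small $\delta > 0$, set $N := \lfloor \log(\varepsilon_0/\delta)/\log|\lambda_0|\rfloor$, pick $n$ large enough that $\|B^k e_n - \lambda_0^k e_n\|\leq|\lambda_0|^k/2$ for $k\leq N$, and let $u_\delta := \delta e_n \in Z$, $u_k := T^k(u_\delta)$, $R(u) := T(u) - Mu$ with $\|R(u)\|\leq C_R\|u\|^\sigma$. From the discrete Duhamel formula $u_k = M^k u_\delta + \sum_{j<k} M^{k-1-j} R(u_j)$, a bootstrap shows inductively that $\|u_k\|\leq D\delta|\lambda_0|^k$ and $u_k\in\V$ for $k = 0,\ldots,N$: the linear term $\|M^k u_\delta\|\leq D_0\delta|\lambda_0|^k$ follows from the block structure and the approximate-eigenvector estimate, while the nonlinear contribution is bounded by $C'D^\sigma\delta^\sigma|\lambda_0|^{k\sigma}$, which is absorbable into $D\delta|\lambda_0|^k$ once $(\delta|\lambda_0|^k)^{\sigma-1}\leq \varepsilon_0^{\sigma-1}$ is sufficiently small --- true for $k\leq N$ and $\varepsilon_0$ small by the choice of $N$. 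Projecting the same Duhamel identity onto $Z$ yields $\|P_Z u_N\|\geq \frac{1}{4}\delta|\lambda_0|^N\geq \varepsilon_0/4$, while $\|P_W u_N\|\leq D\varepsilon_0$, and the graph representation of $\mathcal E$ gives
\[
\text{dist}(u_N,\mathcal E)\geq c_0\bigl(\|P_Z u_N\|-C_h\|P_W u_N\|^2\bigr)\geq c_0\varepsilon_0/8
\]
for $\varepsilon_0$ small, which is the desired conclusion after renaming.

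The main obstacle is closing the bootstrap: the general operator-norm bound $\|M^k\|\leq C_{\tilde\rho}\tilde\rho^k$ forces $\tilde\rho > r(M) = |\lambda_0|$, so by itself it would be too loose to sustain the induction at the rate $|\lambda_0|^k$ demanded by the lower bound on $\|P_Z u_N\|$. What rescues the argument is the sharper growth $\|M^k u_\delta\|\leq D_0\delta|\lambda_0|^k$ enjoyed by the approximate-eigenfunction initial data, which matches the required lower bound; the quality $\varepsilon_n$ of the approximate eigenvector is chosen in a $\delta$-dependent way via $N(\delta)$, which is legal since $\varepsilon_n\to 0$.
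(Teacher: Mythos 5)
Your overall strategy coincides with the paper's: both proofs generalize \cite[Theorem 5.1.5]{Henry} by iterating the discrete Duhamel identity $T^k(u_\delta)=M^ku_\delta+\sum_{j<k}M^{k-1-j}\bigl(T(u_j)-Mu_j\bigr)$, closing the bootstrap at the rate $|\lambda_0|^k$ via the absorption $|\lambda_0|^\sigma>\tilde\rho> r(M)$, and using the $C^2$ structure of $\mathcal E$ only through a quadratic remainder. The one genuinely different ingredient is how transversality to $\mathcal E$ enters: you split $X=W\oplus Z$ with $W=\psi'(0)(\R^m)\subseteq\ker(M-\mathrm{id})$ at the outset, place the approximate eigenvector in $Z$ and track $P_Z u_N$, so that a lower bound on the $Z$-component automatically bounds $\mathrm{dist}(u_N,\mathcal E)$ from below; the paper instead keeps a single approximate eigenvector $u$ of $M$ and argues only at the end that $u$ is uniformly linearly independent of $\psi'(0)\zeta$ (else $Mu\approx u$ would force $|\lambda-1|$ small). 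Your end-game is structurally cleaner, and the identities $\sigma(M)\cup\{1\}=\sigma(B)\cup\{1\}$, $r(B)=r(M)$ are correct. Two small imprecisions: in the distance estimate the quadratic correction must be $C_h\|w\|^2$ taken over the base points $w$ of candidate graph points $w+h(w)$, not $\|P_Wu_N\|^2$ (fixable by the same localization as the paper's Step~3), and the final lower bound is $\varepsilon_0/(4|\lambda_0|)$ rather than $\varepsilon_0/4$.

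There is, however, one genuine gap. $X$ is a \emph{real} Banach space, while $\lambda_0\in\partial\sigma(B)$ and its approximate eigenvectors $e_n$ live in the complexification; if $\lambda_0\notin\R$, your initial datum $u_\delta=\delta e_n$ is not an element of $X$ and $T$ cannot be applied to it. Writing $e_n=u+\rmi v$ and $\lambda_0=|\lambda_0|e^{\rmi\theta_0}$, a real initial datum $\delta u$ requires a lower bound on $\|\mathrm{Re}\bigl(\lambda_0^N(u+\rmi v)\bigr)\|=|\lambda_0|^N\|\cos(N\theta_0)u-\sin(N\theta_0)v\|$, which does not follow from $\|u\|=1$ alone. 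This is exactly the point the paper handles by importing \cite[Lemma 5.1.4]{Henry}: $N$ is chosen so that in addition $|\sin(N\theta_0)|\leq\alpha$, making $\lambda_0^N$ essentially real. Your rigid prescription $N=\lfloor \log(\varepsilon_0/\delta)/\log|\lambda_0|\rfloor$ leaves no freedom to impose such a condition; the paper circumvents this by demanding only $\|u_\delta\|\leq\delta$ rather than $\|u_\delta\|\asymp\delta$, i.e.\ by rescaling the initial datum to $\alpha/r^N$ for whatever admissible $N$ the Diophantine condition permits. Either adopt that device, or supply a quantitative argument that $\inf_{\phi}\|\cos\phi\,u-\sin\phi\,v\|>0$ uniformly (which can be extracted from $\mathrm{Im}\,\lambda_0\neq 0$ and the approximate-eigenvector relation, but must be written out).
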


\begin{proof} \emph{Step 1.} Let $\alpha_0,\beta > 0$ such that $B_{5\alpha_0}(0) \subset \V$ and
\begin{equation}\label{e:2-lin}
 \|T(u_0) - M u_0\| \leq \beta \|u_0\|^\sigma, \qquad \|u_0\|\leq 5\alpha_0.
\end{equation}
There is an approximate eigenvalue $\lambda = r e^{\rmi\theta}$ with $r > 1$ and $\theta\in \R$ in the spectrum of $M$. Furthermore, there are $\eta, K > 0$ with $r+\eta < r^\sigma$ and $\|M^n\| \leq K(r+\eta)^n$ for all $n\geq 0$. In the sequel we choose $\alpha \in (0,\alpha_0)$ stepwise possibly smaller and smaller, only depending on $K,r,\eta,\beta,\psi$.

\emph{Step 2.} Let $\delta\in(0,\alpha)$ be given. As in the proof of  \cite[Lemma 5.1.4]{Henry} we find $N \in \N$ such that 
\begin{equation}\label{tststs}
\frac{\alpha}{r^N} \leq \delta, \qquad  |\sin (N\theta)|\leq \alpha,
\end{equation}
and $u,v\in X$ with $\|u\| = 1$ and $\|v\|\leq 1$ such that  
\begin{equation}\label{e:approx-ev}
 \|M^n (u+\text{i} v) - \lambda^n (u+\text{i} v)\| \leq \alpha. \qquad n= 1,...,N.
\end{equation}
Here the norm is actually the complexified one, i.e., $\|w_1+\text{i} w_2\| = \|w_1\| + \|w_2\|$ for $w_1,w_2\in X$.

Define $u_\delta := \frac{\alpha}{r^N}  u\in X$, such that $\|u_\delta \| =  \frac{\alpha}{r^N} \leq \delta$.  Let $n=1,...,N$ be given. Assume inductively that $\|T^{k}(u_\delta)\|\leq 5\alpha r^{k-N}$ for $k=0,...,n-1$. Then $T^n(u_\delta)$ is well-defined and as in the proof of \cite[Theorem 5.1.5]{Henry} we write
\begin{equation}\label{e:TM-lambda}
 T^n (u_\delta) - \lambda^n u_\delta =  \big( M^n u_\delta - \lambda^n u_\delta\big) + \sum_{k=0}^{n-1} M^{n-k-1}\big (T^{k+1}(u_\delta) - MT^{k}(u_\delta)\big ).
\end{equation}
Denote the right-hand side by $G_n + H_n$. We claim that
\begin{equation}\label{e:esti_G_H}
 \|G_n\| \leq \alpha ^2 r^{-N} +2\alpha |\sin(\theta n)| r^{n-N}, \qquad \|H_n\|\leq C_M\alpha^\sigma r^{n-N},
\end{equation}
where $C_M = \frac{5^\sigma K\beta }{r^\sigma-r-\eta}$ is independent of $n$. To see this, we use \eqref{e:approx-ev} to obtain
\begin{align}\label{e:Gn}
\|G_n\| &\, \leq  \frac{\alpha}{r^N}\big ( \|M^n u - (\text{Re}\,\lambda^n) u + (\text{Im}\,\lambda^n)\,v\|+  \|(\text{Im}\,\lambda^n)\,v\| +  \|(\text{Im}\,\lambda^n)\,u\| \big)\\\nonumber
&\, \leq  \frac{\alpha}{r^N}\big (\|\text{Re}((M^n -\lambda^n)(u+\text{i}v))\| + 2r^n|\sin(\theta n)| \big)\nonumber\\
&\, \leq  \alpha^2 r^{-N}+ 2\alpha |\sin(\theta n)|r^{n-N}.\nonumber
\end{align}
For the sum $H_n$ we use \eqref{e:2-lin}, that $\|T^{k}(u_\delta)\|\leq 5 \alpha r^{k-N} \leq 5\alpha_0$ for $k\leq n-1$ and that $r+\eta < r^\sigma$ to obtain 
\begin{align*}
\|H_n\| &\, \leq  \sum_{k=0}^{n-1} K(r+\eta)^{n-k-1} \beta (5\alpha r^{k-N})^\sigma\\
&\,  \leq \alpha^\sigma 5^\sigma K\beta  r^{\sigma(n-1-N)}\sum_{k=0}^{n-1}\Big( \frac{r+\eta}{r^\sigma}\Big)^{n-k-1}\leq C_M\alpha^\sigma r^{n-N}.
\end{align*}
This shows the claim \eqref{e:esti_G_H}. 

Now it follows from \eqref{e:TM-lambda}, \eqref{e:esti_G_H} and $\sigma > 1$ that $\|T^n(u_\delta)\| \leq 5\alpha r^{n-N},$ provided $\alpha\leq 1 $ is such that $C_M \alpha^{\sigma-1} \leq 1$. By induction, for all $n=0,...,N$ we obtain that $T^n(u_\delta)$ is well-defined and the estimates $\|T^n(u_\delta)\| \leq 5\alpha r^{n-N} $ and \eqref{e:esti_G_H} hold true.

\emph{Step 3}. As a consequence, for $\text{dist}(T^N(u_\delta), \mathcal E)$ we only have to consider $\zeta \in U$ such that $\|\psi(\zeta)\|\leq 10 \alpha$. Indeed, for $\|\psi(\zeta)\|> 10 \alpha$ we have $\|T^N(u_\delta) - \psi(\zeta)\| > 5\alpha$, but $\|T^N(u_\delta) - \psi(0)\| = \|T^N(u_\delta))\| \leq 5\alpha$. There is small $\tau_0 > 0$ such that 
\begin{equation}\label{e:S-para}
\psi(\zeta)  =   \psi'(0)\zeta + \rho(\zeta) 
,\qquad |\zeta|\leq \tau_0,
\end{equation}
where $\|\rho(\zeta)\| \leq C_\rho |\zeta|^2$ for a constant $C_{\rho}$ independent of $\zeta \in B_{\tau_0}(0)$. 
Since $\psi'(0)$ has full rank $m$, we have $C_{\psi'} = \min_{|\xi| = 1} \|\psi'(0)\xi\| > 0$ and we can choose $\tau_0$ such that $C_\rho\tau_0\leq C_{\psi'}/2$. Hence, with $\vartheta = 20/C_{\psi'}$ and small $\alpha$,  we obtain
$$
\|\psi(\zeta)\|\geq \|\psi'(0)\zeta\|- C_\rho|\zeta|^2> 10 \alpha \qquad \text{ for }\, \tau_0\geq  |\zeta|>\vartheta \alpha.
$$
Then, with these choices,
$$\text{dist}(T^N(u_\delta), \mathcal E) = \inf_{|\zeta|\leq \vartheta \alpha} \|T^N(u_\delta) - \psi(\zeta)\|.$$
\emph{Step 4}. Now let $|\zeta|\leq \vartheta \alpha$. Then \eqref{e:TM-lambda}, \eqref{e:S-para} and the estimates \eqref{e:esti_G_H} and $|\sin (N\theta)|\leq \alpha$ yield
\begin{align}
 \|T^N(u_\delta) - \psi(\zeta)\| &\, \geq  \|\lambda^N u_\delta - \psi'(0)\zeta\|-\| G_N\| - \|H_N\|- \|\rho(\zeta)\| \nonumber\\
 &\, \geq \|\alpha e^{\text{i}N\theta}u - \psi'(0)\zeta\| -3\alpha^2- C_M \alpha^\sigma - \vartheta^2 C_{\psi''} \alpha^2.\label{e:m3}
\end{align}
The vectors $u$ and $\psi'(0)\zeta$ are linearly independent if  $\alpha$ is sufficiently small. In fact, otherwise our assumption $\psi'(0)\zeta\in \text{ker}\,(M-\text{id})$ would imply that $Mu = u$. But as  in \eqref{e:Gn}, the estimate \eqref{e:approx-ev} then yields $|\lambda-1| =  \|\lambda u - Mu\| \leq \alpha^2 + 2\alpha,$
which is impossible for small $\alpha$. 

We conclude that $\| e^{\text{i}N\theta}u - \frac{1}{\alpha}\psi'(0)\zeta\|$ is bounded away from zero, uniformly for $|\zeta|\leq \vartheta \alpha$. Hence, decreasing $\alpha$ once more if necessary, we obtain from \eqref{e:m3} and $\sigma > 1$ that $\text{dist}(T^N(u_\delta), \mathcal E) \geq \varepsilon_0$, where $\varepsilon_0>0$ is a multiple of $\alpha$ independent of $\delta$. \end{proof}

Let us now  apply the lemma to  abstract quasilinear problems
\begin{equation}\label{e:quasi-3}
\partial_t u = A(u) u +F(u), \quad t > 0, \qquad u(0) = u_0.
\end{equation}
We denote by $\mathcal L(u_*) = A(u_*) + A'(u_*)[\cdot,u_*] + F'(u_*)$ the linearization of the right-hand side at $u_*$.

\begin{theorem} \label{t:instab-general} Assume the setting of either Theorem \ref{t:KPW-abstract} or Theorem \ref{t:lun-abstract}, and in addition that $A$ and $F$ are $C^2$.  Let $\mathcal E \subset \V\cap X_1$ be an $m$-dimensional $C^2$-manifold of equilibria of \eqref{e:quasi-3}, parametrized by $\psi: U\subset \R^m \to \mathcal E$, and let $u_* \in \mathcal E$ satisfy
\begin{itemize}
\item $\spec\,\mathcal L(u_*)\cap \{\emph{\text{Re}}\,\lambda > 0\} \neq \emptyset$,
\item $\partial_1\psi(\zeta_*),..., \partial_m \psi(\zeta_*) \in \text{\emph{ker}}\,\mathcal L(u_*)$ for $u_* = \psi(\zeta_*)$.
\end{itemize}
Then $u_*$ is orbitally unstable in $\V\subseteq \X$ with respect to $\mathcal E$.
\end{theorem}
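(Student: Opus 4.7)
The plan is to reduce the statement to Lemma~\ref{l:Henry-lem} applied to the time-one solution map, generalizing the argument of Proposition~\ref{prop:instab-wavetrain} so as to track the tangent directions of the equilibrium manifold. First I translate coordinates by $v = u - u_*$, which converts \eqref{e:quasi-3} into an equivalent quasilinear problem with equilibrium $v = 0$, linearization $\L(u_*)$, and shifted equilibrium manifold $\tilde{\mathcal E} := \mathcal E - u_* \ni 0$ parametrized by $\tilde\psi(\zeta) := \psi(\zeta) - u_*$. The tangent space to $\tilde{\mathcal E}$ at $0$ is spanned by $\partial_j \psi(\zeta_*) \in \ker \L(u_*)$, and the hypotheses of Theorem~\ref{t:KPW-abstract} or Theorem~\ref{t:lun-abstract} transfer to this shifted problem with the same phase space $\X$.

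Since $A$ and $F$ are $C^2$, Proposition~\ref{prop:time-T} of the appendix (already invoked in the proof of Proposition~\ref{prop:instab-wavetrain}) guarantees that the local semiflow $\Phi_t$ is $C^2$ from a neighborhood of $0$ in $\X$ into $\X$ for $t$ near $1$, with Fr\'echet derivative $\Phi_1'(0) = e^{\L(u_*)}$. Setting $T := \Phi_1$ and $M := e^{\L(u_*)}$, Taylor's formula yields
\[
\|T(v) - Mv\|_{\X} = \mathcal O(\|v\|_{\X}^2) \qquad \text{as } v \to 0,
\]
so condition \eqref{e:T-sigma} of Lemma~\ref{l:Henry-lem} holds with $\sigma = 2$; the fixed-point condition on $\tilde{\mathcal E}$ is immediate since $\tilde{\mathcal E}$ consists of equilibria. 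The spectral mapping theorem for analytic semigroups \cite[Corollary 2.3.7]{Lunardi1}, applied to the realization of $\L(u_*)$ on $X_0$, produces a spectral value of $M$ with modulus exceeding one in $\mathscr L(X_0)$; conjugating $\L(u_*)$ by a sufficiently large shift $-\omega$ transfers this property to $\mathscr L(X_1)$, and real interpolation extends it to $\mathscr L(\X)$, exactly as in the proof of Proposition~\ref{prop:instab-wavetrain}. Finally, the tangent-space condition $\partial_j\tilde\psi(0) \in \ker(M - \mathrm{id})$ follows from $\partial_j \psi(\zeta_*) \in \ker \L(u_*)$ via the functional calculus.

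With these ingredients, Lemma~\ref{l:Henry-lem} applied to $T = \Phi_1$ produces, for every $\delta > 0$, an initial value $v_\delta$ with $\|v_\delta\|_\X \leq \delta$ and an integer $N$ such that the discrete orbit $\Phi_n(v_\delta)$ stays in the neighborhood for $n \leq N$ yet satisfies $\mathrm{dist}(\Phi_N(v_\delta), \tilde{\mathcal E}) \geq \varepsilon_0$ for some $\varepsilon_0 > 0$ independent of $\delta$. Translating back by $u_*$ exhibits orbital instability of $u_*$ in $\V$ with respect to $\mathcal E$. The main obstacle is confirming that the semiflow is indeed $C^2$ on $\X$ itself (rather than on a stronger or weaker auxiliary space), and that the spectral-radius estimate passes from $\mathscr L(X_0)$ through $\mathscr L(X_1)$ to $\mathscr L(\X)$; both points rest on Proposition~\ref{prop:time-T} together with interpolation and similarity arguments that in the $L^p$-maximal-regularity setting must respect the intermediate space $\X = (X_0, X_1)_{1-1/p, p}$, while in the H\"older setting $\X = X_1$ makes the transfer essentially immediate.
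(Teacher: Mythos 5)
Your proposal is correct and follows essentially the same route as the paper: translate so the equilibrium sits at the origin, take $T=\Phi_1$ and $M=e^{\mathcal L(u_*)}$, verify the quadratic error estimate \eqref{e:T-sigma}, get the spectral radius of $M$ on $\X$ from the spectral mapping theorem \cite[Corollary 2.3.7]{Lunardi1} plus conjugation and interpolation, observe that $e^{t\mathcal L(u_*)}$ fixes $\ker\mathcal L(u_*)$, and invoke Lemma~\ref{l:Henry-lem}. The one point you should tighten is the justification of the $C^2$ smoothness of the time-one map: Proposition~\ref{prop:time-T} is stated (and proved, via maximal $L^p$-regularity and the implicit function theorem) only for the setting of Theorem~\ref{t:KPW-abstract}, whereas the statement also covers the maximal H\"older regularity setting of Theorem~\ref{t:lun-abstract}, where $\X=X_1$. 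For that second case you need a separate argument or reference for the differentiability of $\Phi_1$ with $\Phi_1'(u_*)=e^{\mathcal L(u_*)}$; the paper cites \cite[Proposition 6.2]{Mey} for exactly this. Everything else in your outline matches the paper's proof.
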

\begin{proof} Shrink $\V$ around $u_*$ if necessary such that $t^+(u_0) \geq 1$ for each $u_0\in \V$. Let $\Phi_1:V\to \X$ be the time-one solution map for  \eqref{e:quasi-3}. Define $T(u_0) = \Phi_1(u_* + u_0) -(u_*+u_0)$ for $u_0$ close to $u_*$. Then $T$ is continuous, $T(u) = 0$ for $u\in \mathcal E\cap \V$, and $T$ satisfies \eqref{e:T-sigma} with $M = e^{\mathcal L(u_*)}\in \mathscr L(\X)$, as a consequence of Proposition \ref{prop:time-T} for the setting of Theorem \ref{t:KPW-abstract} and of \cite[Proposition 6.2]{Mey} for the setting of Theorem \ref{t:lun-abstract}. Moreover, $M$ has spectral radius larger than one by \cite[Corollary 2.3.7]{Lunardi1} and interpolation, and  $\partial_j \psi(\zeta_*)\in \text{ker}(M-\text{id})$ follows from the assumption. Thus Lemma \ref{l:Henry-lem} applies.
\end{proof}

Of course, Lemma \ref{l:Henry-lem} applies in any well-posedness setting for nonlinear parabolic problems.

\begin{appendix}
\section{Auxiliary results}
\subsection{Superposition operators}\label{a:sup}
We give some details for the properties of the nonlinear maps employed in the well-posedness results.

\begin{lemma} \label{lem:superpos-H1} Let $U_1,U_2\subset \R^N$ be open neighbourhoods of zero, let $a :\R\times U_1\to \R^N$ be $C^{k+3}$ and let $f:\R\times U_1\times U_2\to \R^N$ be $C^{k+2}$, with $f(\cdot,0,0) \in H^1$. Define 
$$A(u)v = (a(\cdot,u)v_x)_x, \qquad F(u) = f(\cdot,u,u_x).$$
Then there is an open subset $\V$ of $H^2$ such that $A \in C^k(\V, \mathscr L(H^3,H^1))$ and $F \in C^k(\V,H^1)$, and both maps are Lipschitz on bounded subsets of $\V$. One can take $\V = H^2$ if $U_1 = U_2 = \R^N$. At $\bu\in \V$, the derivatives are for $u\in H^2$ and $v\in H^3$ given by
$$A'(\bu)[u,v] =(\partial_2 a(\cdot,\bu)[u, v_x])_x, \qquad F'(u_0)v = \partial_2f(\cdot,\bu, \bu_x)v + \partial_3f(\cdot,\bu, \bu_x)v_x.$$
\end{lemma}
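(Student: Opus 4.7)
The plan is to exploit two one-dimensional features repeatedly: the Sobolev embedding $H^2(\R)\hookrightarrow \BUC^1(\R)$, and the mixed multiplication estimate
$$
\|gw\|_{H^1}\leq C\|g\|_{\BC^1}\|w\|_{H^1}, \qquad g\in \BC^1,\ w\in H^1,
$$
which, unlike the algebra property of $H^1$, does not require the first factor to decay at infinity. This matters because the coefficients $a(\cdot,u)$ arising below are bounded but not integrable. For the neighbourhood $\V$, I would take the open set of $u\in H^2$ whose images $u(x)$ and $u_x(x)$ remain in fixed compact subsets of $U_1$, $U_2$; openness follows from continuity of the map $u\mapsto (u,u_x)$ from $H^2$ to $\BUC\times\BUC$, and if $U_1=U_2=\R^N$ no restriction is needed, so $\V=H^2$.

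For $F$ the first step is the Taylor identity
$$
F(u) - f(\cdot,0,0) = \int_0^1 \bigl(\partial_2 f(\cdot, tu, tu_x)\,u + \partial_3 f(\cdot, tu, tu_x)\,u_x\bigr)\,dt.
$$
The hypothesis gives $f(\cdot,0,0)\in H^1$, and each integrand is a product of a uniformly bounded $\BC^1$-coefficient (obtained by composing the $C^{k+2}$-function with the $\BUC^1$-function $u$) with an $H^1$-factor, whence $F(u)\in H^1$ by the multiplication estimate above. The candidate derivative $F'(\bu)v = \partial_2 f(\cdot,\bu,\bu_x)\,v + \partial_3 f(\cdot,\bu,\bu_x)\,v_x$ is bounded $H^2\to H^1$ by the same estimate, and the second-order Taylor remainder is $\mathcal{O}(\|v\|_{H^2}^2)$ uniformly on bounded subsets of $\V$ since the $\BUC^1$-images of the arguments stay in a fixed compact set. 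Higher-order smoothness is obtained by induction on $k$: the derivative $\bu\mapsto F'(\bu)$ has the same Nemytskii form with $f$ replaced by its first partial derivatives, which are $C^{k+1}$.

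For $A$ the analogous step is the expansion
$$
A(u)v = a(\cdot,u)\,v_{xx} + \bigl(\partial_1 a(\cdot,u) + \partial_2 a(\cdot,u)\,u_x\bigr)v_x.
$$
For $v\in H^3$ the factors $v_{xx}$ and $v_x$ lie in $H^1$ and the coefficients in $\BC^1$ (any $u_{xx}\in L^2$ appearing after a further $x$-derivative pairs with $v_x\in H^2\subset \BUC^1$ to give an $L^2$-contribution), so the product estimate yields $A(u)\in\mathscr{L}(H^3,H^1)$ with norm locally bounded on $\V$. The stated formula $A'(\bu)[u,v]=(\partial_2 a(\cdot,\bu)[u,v_x])_x$ is verified by Taylor-expanding $a(\cdot,\bu+u)$ in its second argument, and $A\in C^k$ follows by the same induction as for $F$. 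Lipschitz bounds on bounded $B\subset \V$ come from the uniform Lipschitz property of $a$, $f$ and their relevant derivatives on the fixed compact image set containing $\{(u(x),u_x(x)) : u\in B,\ x\in\R\}$.

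The main obstacle I expect is the bookkeeping of $\BC^1$-versus-$H^1$ regularity in the various products. Since $a(\cdot,u)$ itself is not in $H^1$, the algebra property of $H^1$ cannot be invoked as a black box; every factor in every product must be identified as either a bounded $\BC^1$-coefficient or an $H^1$-element, and each of the inductive differentiations splits the coefficient further. This accounting is what dictates the orders $k+2$ and $k+3$ of smoothness required of $f$ and $a$ respectively.
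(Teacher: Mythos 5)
Your proposal is correct and follows essentially the same route as the paper's proof: choose $\V$ via the embedding $H^2\subset\BUC^1$ so that the images of $u,u_x$ stay uniformly inside $U_1,U_2$, bound all products by pairing a $\BC$- or $\BC^1$-coefficient with an $H^1$- or $L^2$-factor (rather than invoking the $H^1$-algebra property, exactly as the paper does), identify the derivative via a second-order Taylor remainder estimate, and iterate for $C^k$; the Taylor integral from $0$ to $u$ together with $f(\cdot,0,0)\in H^1$ to get $F(u)\in H^1$ is also the paper's (implicit) argument. The only cosmetic difference is that you write out the expansion of $A(u)v$ and the role of $u_{xx}\in L^2$ explicitly, where the paper simply remarks that the same estimates apply to $u\mapsto a(\cdot,u)$.
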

\begin{proof} Choose $\V \subseteq H^2$ such that for $u\in \V$ the closure of the images of $u,u_x \in H^1\subset \BC$ are uniformly contained in $U_1$ and $U_2$, respectively. Let $u\in \V$. For $h\in H^2$ we use $\|uh\|_{L^2}\leq \|u\|_{\BC}\|h\|_{L^2}$ and $\|u\|_{\BC}\leq C\|u\|_{H^1}$ to estimate
\begin{align*}
\|\partial_2 f(\cdot,u,u_x)h\|_{H^1}&\, \leq \|\partial_2f(\cdot,u,u_x)\|_{\BC}(\|h\|_{L^2} + \|h_x\|_{L^2} )\\
 + \|&f''(\cdot,u,u_x)\|_{\BC}(\|h\|_{L^2} + \|u_x\|_{L^2} \|h\|_{\BC} + \|u_{xx} \|_{L^2} \|h\|_{\BC})\\
 &\,\leq C(\|f'(\cdot, u,u_x)\|_{\BC} + \|f''(\cdot, u,u_x)\|_{\BC}\|u\|_{H^2}) \|h\|_{H^1}.
\end{align*}
In the same way we obtain
$$\|\partial_3 f(\cdot,u,u_x)h_x\|_{H^1}\leq C(\|f'(\cdot, u,u_x)\|_{\BC} + \|f''(\cdot, u,u_x)\|_{\BC}\|u\|_{H^2}) \|h\|_{H^2}.$$
Defining $F'(u) h = \partial_2 f(\cdot,u,u_x) h + \partial_3 f(\cdot,u, u_x) h_x$ we thus have $F'(u) \in \mathscr L(H^2, H^1)$, and that $u\mapsto F'(u)$ is bounded on bounded subsets of $\V$. If $h$ is small, then the pointwise identity
\begin{align*}
F(u+&\,h)-F(u)-F'(u)h   \\
&\,= \int_0^1 \int_0^1 \big(\partial_{22}f(\cdot, u + \tau s h, u_x)[h,\tau h] + \partial_{33}f(\cdot, u, u_x + \tau s h_x)[h_x,\tau h_x] \big)  \rmd \tau\rmd s
\end{align*}
and the same types of estimates as above yield
\[
\|F(u+h)-F(u)-F'(u)h\|_{H^1} \leq C(f,h)\|h\|_{H^2}^2,
\]
where $C(f,h)$ is bounded as $h\to 0$. These arguments and $f(\cdot,0,0)\in H^1$ yield $F(u)\in H^1$ for $u\in \V$ and the differentiability of $F$ in $\V$. The Lipschitz property follows from the boundedness of $F'$. Iteration for higher derivatives gives $F\in C^k$. The arguments apply to $u\mapsto a(u)$ on $H^2$ as well, which yields the assertion on $A$. \end{proof}

Note that if $f$ is independent of $u_x$, then the arguments from the proof above show that $f:H^1 \to H^1$ is smooth.

\begin{lemma}\label{lem:superpos-n} In the situation of Theorem \ref{t:quasiRDS-Rn}, assume in addition that $a$ and $f$ are $C^{k+2}$ for some $k\geq 0$. Let $A$ and $F$ be defined by
$$A(u)v =  \partial_i (a_{ij}(\overline{u}+ u) \partial_j v), \quad F(u) =   \partial_i (a_{ij}(\overline{u}+ u) \partial_j \overline{u}) + c_i \partial_i(\bar u +u)  + f(\overline{u}+u).$$
Then for all sufficiently large $p> 2$ there is is an open neighbourhood $\mathcal V\subset B_{2,p}^{2-2/p}$ of the zero function such that $F \in C^k(\V, L^2)$ and $A\in C^k(\mathcal V,  \mathscr L(H^2, L^2))$, and both maps are Lipschitz on bounded sets. One can take $\mathcal V = B_{2,p}^{2-2/p}$ if $U = \R^N$.
\end{lemma}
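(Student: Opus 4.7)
The plan is to adapt the argument of Lemma \ref{lem:superpos-H1} to the Besov setting, replacing the $H^1$-algebra property by Sobolev embeddings valid only up to dimension three. First, I would fix $p > 2$ large enough that $2 - 2/p > n/2$, which by \eqref{e:Sob-Besov} and \cite[Theorem 2.8.1]{Triebel} gives embeddings $B_{2,p}^{2-2/p} \hookrightarrow \BC^\sigma$ for some $\sigma > 0$, as well as $B_{2,p}^{2-2/p} \hookrightarrow H^{2-\varepsilon} \hookrightarrow W^{1,q}$ with $q$ as large as needed; in particular, for $n \leq 3$ one can arrange $\partial_i u \in L^3$ for $u \in B_{2,p}^{2-2/p}$ by choosing $p$ so that $1-2/p \geq n(1/2-1/3)$. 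Then I would pick $\V$ as the open set of those $u$ whose image stays uniformly inside a compact subset of $U$, which is possible by the $\BC$-embedding, and take $\V = B_{2,p}^{2-2/p}$ when $U = \R^N$.

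For $A(u)v$, I would expand the outer derivative as $a_{ij}(\bar u + u)\partial_i\partial_j v + a'_{ij}(\bar u + u)[\partial_i(\bar u + u)]\partial_j v$. The first summand is bounded in $L^2$ by $\|a(\bar u+u)\|_\infty \|v\|_{H^2}$ using the $\BC$-embedding. For the second, since $\partial_i \bar u \in L^\infty$, the only nontrivial piece is $a'_{ij}(\bar u + u)[\partial_i u]\partial_j v$, which I would estimate by H\"older using $\partial_j v \in H^1 \hookrightarrow L^6$ (in dimension $\leq 3$) and $\partial_i u \in L^3$ by the embedding above, yielding $A \in \mathscr L(H^2,L^2)$ with operator norm continuous in $u$.

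For $F(u)$ the key observation is that $F(0) = 0$, because $\bar u$ solves the steady-state equation \eqref{e:quasi-Rn}. Writing $F(u) = F(u) - F(0)$ and using the Taylor-type identities $a_{ij}(\bar u + u) - a_{ij}(\bar u) = \int_0^1 a'_{ij}(\bar u + \tau u)u\,d\tau$ and the analogue for $f$, every resulting summand is a product of a uniformly bounded Nemytskii factor and a factor involving $u$, $\partial_i u$, or $\partial_i \partial_j u$; each such product lies in $L^2$ by H\"older estimates of exactly the same type as for $A$, combined with $u \in L^2$, $\partial u \in L^2$, $\bar u \in \BC^2$. The Lipschitz dependence, differentiability, and higher $C^k$ regularity then follow as in Lemma \ref{lem:superpos-H1}: one writes $A(u+h)-A(u)-A'(u)h$ and the analogue for $F$ as second-order Taylor remainders, controls each summand by a multiplier times quadratic expressions in $h$ (and $\partial h$), and iterates for $k \geq 2$, using the assumed $C^{k+2}$ regularity of $a$ and $f$.

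The main obstacle is the dimensional restriction: the crucial product estimate $\partial u \cdot \partial v \in L^2$ requires pairing $L^3$ with $L^6$, and $L^3$-control of $\partial u$ via $\partial u \in B_{2,p}^{1-2/p}$ forces $p$ large enough that $1-2/p \geq n(1/2 - 1/3)$, which is only possible for $n \leq 3$. Everything else is a bookkeeping exercise parallel to the one-dimensional $H^2$ case, the only delicate points being (i) that $\V$ must be small enough so the images of $\bar u + u$ remain inside $U$ (which the $\BC^\sigma$-embedding guarantees in a $B_{2,p}^{2-2/p}$-neighbourhood), and (ii) that the $C^k$ iteration introduces products of derivatives of $a$, $f$ with $u$ and $\partial u$, all of which are accommodated by the same two embeddings used for the first derivative estimate.
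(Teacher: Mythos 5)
Your proposal is correct and follows essentially the same route as the paper: choose $p$ large so that $B_{2,p}^{2-2/p}$ embeds into $\BC$ and into a first-order space of high integrability, isolate the quadratic gradient term $a_{ij}'(\bu+u)\,\partial_i u\,\partial_j v$ as the only delicate product, control it by H\"older using Sobolev embeddings valid for $n\leq 3$, and reduce everything else to the scheme of Lemma \ref{lem:superpos-H1} via $F(0)=0$. The only cosmetic difference is your choice of exponents ($L^3\cdot L^6\subset L^2$ via $\partial_i u\in L^3$ and $H^1\hookrightarrow L^6$) where the paper uses $B_{2,p}^{2-2/p}\subset H^{1,4}$ and $H^2\subset H^{1,4}$ with $L^4\cdot L^4\subset L^2$; both are equally valid.
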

\begin{proof} Since $n\leq 3$, from Sobolev's embedding \eqref{e:Sob-Besov} we find $p > 2$ such that $B_{2,p}^{2-2/p} \subset H^{1,4}\cap \BC$. Then $\mathcal V$ can be chosen such that the image of $\bu+ u$ is strictly contained in $U$, uniformly in $u\in \mathcal V$. The regularity of $A$ and $F$ can be derived as in Lemma \ref{lem:superpos-H1}, using $F(0) = 0$. The need for $B_{2,p}^{2-2/p} \subset  H^{1,4}$ and thus also $H^2\subset H^{1,4}$ comes from the nonlinear gradient terms. Indeed, assume for simplicity that $\bu = 0$. Then  for $u_1,u_2\in B_{2,p}^{2-2/p}$ and $v\in H^2$ we can estimate
\begin{align*}
\|a_{ij}'(u_1)\partial_i &\,u_1 \partial_j v - a_{ij}'(u_2)\partial_i u_2 \partial_j v\|_{L^2} \leq \|a_{ij}'(u_1)\partial_i u_1 - a_{ij}'(u_2)\partial_i u_2\|_{L^4}\|\partial_j v\|_{L^4}\\
&\, \leq \big(\|a_{ij}'(u_1)\|_{\BC} \|u_1 - u_2\|_{H^{1,4}} + \|u_2\|_{H^{1,4}} \|\|a_{ij}'(u_1)-a_{ij}'(u_2)\|_{\BC}\big)\|v\|_{H^{1,4}},
\end{align*}
employing H\"older's inequality $L^4\cdot L^4\subset L^2$ in the first line. \end{proof}

\subsection{A commuting isomorphism for elliptic operators} The following auxiliary result for second order differential operators allows to transfer spectral properties from $L^2$ to $H^1$ by conjugation. 

\begin{lemma} \label{l:iso} Let $\alpha,\beta,\gamma\in\BC^1(\R, \R^{N\times N})$, and assume that $\alpha(x)$  is positive definite, uniformly in $x$. Then there is a continuous isomorphism $T:H^1 \to L^2$, which also maps $T:H^3 \to H^2 $ isomorphically, that commutes on $H^3$ with the operator $\varphi\mapsto \L\varphi := \alpha \varphi_{xx} + \beta \varphi_x + \gamma \varphi.$
\end{lemma}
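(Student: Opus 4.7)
The plan is to take $T$ to be the square root of a suitable shift of $-\L$. The assumptions on $\alpha,\beta,\gamma$ place us in the setting of \cite[Corollary 9.5]{AHS94}, already invoked in the proof of Theorem~\ref{t:quasiRDSsmooth}, so $\L$ realised on $L^2$ with domain $H^2$ generates an analytic $C_0$-semigroup. Choose $\omega>0$ so large that $B:=\omega-\L$, on $L^2$ with domain $H^2$, is sectorial with spectrum strictly inside the open right half-plane, and in particular boundedly invertible. The holomorphic functional calculus for sectorial operators then produces a closed operator $T:=B^{1/2}$ with $T^2=B$ and bounded inverse $B^{-1/2}\in\mathscr L(L^2)$. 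Since $T$ is a function of $B$, it commutes with $B$, and hence with $\L=\omega-B$, on $D(B^{3/2})$; in particular $T\L=\L T$ holds there.

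The remainder of the proof reduces to identifying the two domains
\[
D(B^{1/2})=H^1,\qquad D(B^{3/2})=H^3.
\]
Granting these, the two isomorphism claims follow at once: $T$ is closed on $L^2$ with full domain $H^1$, so continuity $T:H^1\to L^2$ is immediate from the closed graph theorem, while its inverse $B^{-1/2}$ maps $L^2$ boundedly onto $H^1$; restricted to $H^3=D(B^{3/2})$, $T$ takes values in $D(B)=H^2$ and is an isomorphism there as well. The second domain identity is a consequence of the first together with elliptic regularity on $\R$: $u\in D(B^{3/2})$ iff $u\in H^2$ and $Bu\in D(B^{1/2})=H^1$, which in turn, because $B$ is a second-order strongly elliptic system with $\BC^1$ coefficients, is equivalent to $u\in H^3$.

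The main technical step, and the principal obstacle, is the identification $D(B^{1/2})=H^1$ for the variable-coefficient system $B$ whose leading coefficient $\alpha$ is not assumed symmetric. I would deduce it from the fact that $B$ has bounded imaginary powers on $L^2$ with power angle below $\pi/2$; the interpolation theorem for such operators (see \cite[1.15.3]{Triebel}) then gives $D(B^{1/2})=[L^2,H^2]_{1/2}=H^1$. The bounded imaginary powers property under our $\BC^1$ assumption can be obtained either from the $L^2$-theory of strongly elliptic systems (a consequence of the Kato square-root theorem for systems with Lipschitz coefficients), or, more elementarily, by writing $B$ as a relatively bounded perturbation of the positive self-adjoint operator $v\mapsto-\partial_x\bigl(\tfrac12(\alpha+\alpha^{T})\partial_xv\bigr)+\omega v$, for which the identification is classical spectral theory, and then invoking a standard perturbation theorem for bounded imaginary powers.
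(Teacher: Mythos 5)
Your proposal follows essentially the same route as the paper's proof: set $T=B^{1/2}$ with $B=\omega-\L_{L^2}$, identify $D(B^{1/2})=[L^2,H^2]_{1/2}=H^1$ via bounded imaginary powers and complex interpolation (\cite[Theorem 1.15.3]{Triebel}), prove $D(B^{3/2})=H^3$ by exactly the elliptic-regularity argument you give (solve for $\alpha u_{xx}$ and invert $\alpha$ pointwise, using $\alpha^{-1}\in\BC^1$), and obtain commutativity because $T$ is a function of $B$. The only point worth flagging concerns how you justify the bounded imaginary powers. The paper gets them from the bounded $H^\infty$-calculus of \cite[Theorem 9.6]{AHS94}, which covers non-symmetric strongly elliptic systems with these coefficients, and your first suggested route (the $L^2$ square-root theory for strongly elliptic systems) is consistent with that. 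Your second, ``more elementary'' route, however, does not work in the generality required here: the difference between $B$ and the self-adjoint operator built from $\tfrac12(\alpha+\alpha^{T})$ contains the full second-order term involving $\tfrac12(\alpha-\alpha^{T})\partial_x^2$, which is not a lower-order perturbation, so the standard perturbation theorems for bounded imaginary powers do not apply unless $\alpha$ is symmetric --- and the paper explicitly insists that it need not be. Since you offer that route only as an alternative, the proof as a whole stands, but you should drop or correct that remark.
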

\begin{proof} 
The isomorphism $T$ will be the square root of a shift of $\L$. The main point is to show that its domain for the realization on $H^2$ is $H^3$.

Denote by $\L_{L^2}$ the realization of $\L$ on $L^2$, with domain $H^2$. The properties of $\alpha$ together with \cite[Theorem 9.6]{AHS94} imply that there is $\omega> 0$ such that $B=\omega-\L_{L^2}$ is a (negative) sectorial operator and has a bounded holomorphic functional calculus of angle strictly smaller than $\frac{\pi}{2}$. In particular, $T:= B^{1/2}$ is a well-defined continuous isomorphism $D(B^{1/2}) \to L^2$, see \cite[Theorem 1.15.2]{Triebel}. The boundedness of the holomorphic calculus of $B$ implies that it has the property of bounded imaginary powers. Therefore, combining \cite[Lemma 4.1.11]{Lunardi2} with \cite[Theorem 1.15.3]{Triebel} (or \cite[Theorem 4.2.6]{Lunardi2}), we have $D(B^{1/2})  = [L^2, H^2]_{1/2}$, where $[\cdot,\cdot]_{1/2}$ denotes complex interpolation (see \cite{Bergh-Lof, Lunardi2, Triebel}). Since $[L^2, H^2]_{1/2} = H^1$ by \cite[Remark 2.4.2/2]{Triebel}, it follows that $T:H^1\to L^2$ is an isomorphism.

Next, we show that $T:H^3\to H^2$ is an isomorphism. Again by \cite[Theorem 1.15.2]{Triebel}, $T$ also maps isomorphically $D(B^{3/2}) \to D(B) = H^2$. We show that $D(B^{3/2}) = H^3$ as Banach spaces. By \cite[Lemma 4.1.16, Theorem 4.1.11]{Lunardi2} and the previous considerations we have 
$$D(B^{3/2}) = \{u\in D(B):B u\in D(B^{1/2})\} =  \{u\in H^2:\L u\in H^1\}.$$
For $u\in H^3$ we clearly have $\L u\in H^1$, hence $H^3\subseteq D(B^{3/2})$. Conversely, let $u\in H^2$ such that $\L u\in H^1$. Then $\alpha u_{xx} = \psi:= -\beta u_x - \gamma u+ \L u \in H^3$. By assumption, the coefficient $\alpha$ is pointwise invertible, with $\alpha^{-1} \in \BC^1$. Therefore $u_{xx} = \alpha^{-1} \psi \in H^1$, and so $u\in H^3$. We conclude that $D(B^{3/2}) = H^3$ as sets. Arguing as before, we get
$$\|u\|_{D(B^{3/2})} = \|u\|_{H^2} + \|\L u\|_{H^1} \leq C \|u\|_{H^3},$$
for a constant independent $C$ of $u$. Since we already know that $H^3$ is complete with respect to $\|\cdot\|_{D(B^{3/2})}$ and $\|\cdot\|_{H^3}$, the converse estimate follows from the open mapping theorem.

Finally, it follows from \cite[Theorem 4.1.6]{Lunardi2} that $\omega-\L_{L^2}$ and its square root $T$ commute on $H^3$. This implies that also $\L_{L^2}$ commutes with $T$.
\end{proof}

The assertion of the above lemma remains valid, with literally the same proof, if one replaces the $L^2$-setting by an $L^q$-setting, where $q\in (1,\infty)$.

\subsection{The time-one solution map} We use the implicit function theorem to prove that in the neighbourhood of an equilibrium the solution semiflow obained from Theorem \ref{t:KPW-abstract} for \eqref{e:quasi-abs} is as smooth as the right-hand side. See \cite[Theorem 3.4.4]{Henry} for the semilinear case, as well as  \cite[Theorem 8.3.4]{Lunardi1} and \cite[Theorem 4.1]{Amann3} for quasilinear frameworks. 

\begin{proposition} \label{prop:time-T}In the situation of Theorem \ref{t:KPW-abstract}, assume additionally that
$$A\in C^k(\mathcal V, \mathscr L(X_1,X_0)), \qquad F\in C^k(\mathcal V, X_0),$$
for some $k\in \N$. Let $u_*\in \mathcal V\cap X_1$ be an equilibrium of \eqref{e:quasi-abs}, i.e., $A(u_*)u_* + F(u_*) = 0$. Then for any $\tau>0$ there is a neighbourhood $\mathcal U \subseteq \mathcal V$ of $u_*$ such that the time-$\tau$ map $u_0\mapsto \Phi_\tau(u_0) = u(\tau;u_0)$ for the solution semiflow for  \eqref{e:quasi-abs} is well-defined and belongs to $C^k(\mathcal U, \mathcal X)$. Moreover, let $\mathcal L_* = A(u_*) + A'(u_*)[\cdot,u_*] + F'(u_*)$. Then  $\Phi_\tau' (u_*)= e^{\tau\mathcal L_*}.$
\end{proposition}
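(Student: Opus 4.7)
The plan is to invoke the implicit function theorem on a maximal regularity space. Fix $\tau>0$, set $J=(0,\tau)$, and consider the Banach space
\begin{equation*}
E := H^{1,p}(J, X_0) \cap L^p(J, X_1),
\end{equation*}
which embeds continuously into $C(\overline{J}, \mathcal{X})$ by the classical trace theorem for parabolic spaces (recalling that $\mathcal{X} = (X_0,X_1)_{1-1/p,p}$). Since $\mathcal{V}\subseteq\mathcal{X}$ is open and $u_*\in\mathcal{V}$, the set $\mathcal{W}:=\{u\in E: u(t)\in\mathcal{V}\text{ for all } t\in\overline{J}\}$ is open in $E$ and contains the constant curve $\tilde u_*\equiv u_*$. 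Define
\begin{equation*}
G: \mathcal{V}\times\mathcal{W}\to L^p(J, X_0)\times\mathcal{X}, \qquad G(u_0,u) := \bigl(\partial_t u - A(u)u - F(u),\; u(0) - u_0\bigr),
\end{equation*}
so that $G(u_*,\tilde u_*)=0$ by the equilibrium property of $u_*$.

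A short computation using the product rule on $u\mapsto A(u)u$ gives
\begin{equation*}
\partial_u G(u_*,\tilde u_*)\,v \;=\; \bigl(\partial_t v - \mathcal{L}_* v,\; v(0)\bigr), \qquad v\in E.
\end{equation*}
The perturbation $A'(u_*)[\cdot,u_*] + F'(u_*)$ is bounded from $\mathcal{X}$ into $X_0$ since $A,F\in C^1$, hence also from $X_1\hookrightarrow\mathcal{X}$ into $X_0$. Consequently $\mathcal{L}_*$ remains a generator of an analytic $C_0$-semigroup on $X_0$ with domain $X_1$, and by the discussion following Theorem \ref{t:KPW-abstract} it enjoys maximal $L^p$-regularity on $J$ (after a harmless shift by a large multiple of the identity, if needed). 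This is precisely the statement that $\partial_u G(u_*,\tilde u_*): E\to L^p(J,X_0)\times\mathcal{X}$ is a topological isomorphism.

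The step requiring the most care is to verify that $G$ is of class $C^k$ on $\mathcal{V}\times\mathcal{W}$. The terms $u\mapsto\partial_t u$ and $(u_0,u)\mapsto u(0)-u_0$ are linear and bounded, so the whole burden falls on the superposition $\Sigma(u):=A(u)u + F(u)$ regarded as a map $\mathcal{W}\to L^p(J,X_0)$. Here one uses the embedding $E\hookrightarrow C(\overline{J},\mathcal{X})$ to evaluate $A(u(t))$ and $F(u(t))$ pointwise in $t$, together with the $C^k$-regularity of $A$ and $F$ on $\mathcal{X}$ and the uniform boundedness of the relevant derivatives on compact subsets of $\mathcal{V}$, in order to lift pointwise regularity to regularity on the Bochner space $L^p(J,X_0)$. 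The argument parallels those of Lemmas \ref{lem:superpos-H1} and \ref{lem:superpos-n}, now carried out on a mixed space-time space; this lifting is the main technical obstacle.

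Once these hypotheses are in place, the implicit function theorem yields a neighbourhood $\mathcal{U}\subseteq\mathcal{V}$ of $u_*$ and a unique $C^k$ map $\Psi:\mathcal{U}\to E$ with $G(u_0,\Psi(u_0))=0$. Uniqueness in Theorem \ref{t:KPW-abstract} identifies $\Psi(u_0)$ with the solution $u(\cdot;u_0)$, and composition with the continuous linear evaluation $\mathrm{ev}_\tau: E\to\mathcal{X}$, $v\mapsto v(\tau)$, gives $\Phi_\tau=\mathrm{ev}_\tau\circ\Psi\in C^k(\mathcal{U},\mathcal{X})$. Finally, differentiating the identity $G(u_0,\Psi(u_0))=0$ at $u_0=u_*$ in a direction $h\in\mathcal{X}$ and writing $v:=D\Psi(u_*)h$, one sees that $v$ solves $v_t=\mathcal{L}_* v$ with $v(0)=h$, so that $\Phi_\tau'(u_*)h=v(\tau)=e^{\tau\mathcal{L}_*}h$, as asserted.
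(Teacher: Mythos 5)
Your proposal is correct and follows essentially the same route as the paper: the implicit function theorem applied to $(u_0,u)\mapsto(\partial_t u - A(u)u - F(u),\,u(0)-u_0)$ on the maximal regularity space $H^{1,p}(J,X_0)\cap L^p(J,X_1)$, with invertibility of the partial derivative coming from maximal $L^p$-regularity of $-\mathcal L_*$ (the paper cites Dore's perturbation theorem for the lower-order term, where you argue via the generator property plus the Hilbert-space equivalence noted after Theorem \ref{t:KPW-abstract}). The only cosmetic differences are that the paper simply assumes $\mathcal V=\X$ rather than introducing the open set $\mathcal W$, and that it asserts the $C^k$-smoothness of the superposition map without further comment, whereas you correctly flag it as the step needing the most care.
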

\begin{proof} We assume $\mathcal V = \X$. Set $\E_1 = H^{1,p}(0,\tau; X_0) \cap L^p(0,\tau; X_1)$ and $\E_0 = L^p(0,\tau; X_0),$ and consider 
$$\Psi :\E_1\times \mathcal X \to \E_0\times \mathcal X, \qquad \Psi(u,u_0) = (\partial_t u - A(u)u -F(u), u(0) -u_0).$$ Note that $u\in \E_1$ solves \eqref{e:quasi-abs} on $(0,\tau)$ with initial value $u_0\in \mathcal X$ if and only if $\Psi(u,u_0) = (0,0)$.
Consider $u_*$ as an element of $\E_1$. Then $\Psi(u_*,u_*) =(0,0)$. The assumptions on $A$ and $F$ imply $\Psi\in C^k(\E_1\times \mathcal X, \E_0\times \mathcal X)$ and
$$D_{1}\Psi(u_*,u_*)v = (\partial_t v - \L_* v, v(0)),\qquad v\in \E_1.$$
From the proof of Theorem \ref{t:KPW-abstract} we know that $-A(u_*)$ enjoys maximal $L^p$-regularity. The linear operator $A'(u_*)[\cdot,u_*] + F'(u_*)$ is continuous from $\mathcal X = (X_0,X_1)_{1-1/p,p}$ to $X_0$, i.e., it is of lower order. Thus $-\mathcal L_*$ has maximal $L^p$-regularity as well, see \cite[Theorem 6.2]{Dore00}. In other words, $D_1\Psi(u_*,u_*)\in \L (\E_1, \E_0\times \X)$ is an isomorphism. This gives a neighbourhood $\mathcal U$ of $u_*$ in $\mathcal X$ such that $u_0\mapsto u(\cdot;u_0)$ belongs to $C^k(\mathcal U, \E_1)$, where $u(\cdot;u_0)$ is the solution of \eqref{e:quasi-abs} on $(0,\tau)$. Moreover, for $v_0\in \X$ we  differentiate $\Psi(u(\cdot;u_0),u_0)=0$ in $u_*$ to get that
$$D_{u_0} u(\cdot;u_*)v_0= - D_1\Psi(u_*,u_*)^{-1} D_2\Psi(u_*,u_*)v_0 = - D_1\Psi(u_*,u_*)^{-1} (0,-v_0)$$
is the unique solution $v\in \E_1$ of $\partial_t v -\L_* v = 0$ on $(0,\tau)$ with $v(0) = v_0$, i.e., $D_{u_0} u(\cdot;u_*) = e^{\cdot\L_*}$. Finally, the trace at time $\tau$ is linear and continuous as a map $\E_1\to \X$, see \cite[Theorem 1.14.5]{Triebel}. Applying this to $u(\cdot;u_0)$ gives the assertion for $\Phi_\tau$. \end{proof}
\end{appendix}

\end{document}